\documentclass[11pt]{article}
\textwidth 6.5in
\textheight 8.5in
\oddsidemargin -.1in
\RequirePackage[OT1]{fontenc}
\usepackage{amsthm,amsmath,amssymb,mathrsfs,graphicx}
\usepackage{verbatim,multirow,subfigure}
\RequirePackage[colorlinks,citecolor=blue,urlcolor=blue]{hyperref}

\RequirePackage[numbers]{natbib}



\newtheorem{proposition}{Proposition}[section]
\numberwithin{equation}{section}
\theoremstyle{plain}
\newtheorem{thm}{Theorem}[section]
\newtheorem{lemma}{Lemma}[section]
\newtheorem{remark}{Remark}[section]

\def\convinl2{\stackrel{\LL_2}{\longrightarrow}}

\def\LL{\mathbb L}


\def \loe{>_{L}}
\def \loeeq{\geq_{L}}

\newcommand{\mcL}{\mathcal{L}}


\newcommand{\lb}{\label}


\newcommand{\be}{\begin{equation}}
\newcommand{\ee}{\end{equation}}
\newcommand{\beqa}{\begin{eqnarray*}}
\newcommand{\eeqa}{\end{eqnarray*}}
\newcommand{\beqn}{\begin{eqnarray}}
\newcommand{\eeqn}{\end{eqnarray}}
\newcommand{\ba}{\begin{array}}
\newcommand{\ea}{\end{array}}
\newcommand{\bc}{\begin{center}}
\newcommand{\ec}{\end{center}}
\newcommand{\btab}{\begin{tabular}}
\newcommand{\etab}{\end{tabular}}

\newcommand{\plus}{\scalebox{1}{$+$}}

\begin{document}

\title{Estimation of Causal Invertible VARMA Models}

\author{Anindya Roy$^1$, Tucker S. McElroy$^2$ and Peter Linton$^1$\\
$^1$Department of Mathematics and Statistics\\\
University of Maryland Baltimore County,
Baltimore MD 21250\\
$^2$U.S. Census Bureau,
Suitland, MD 20746}
\date{}
\maketitle

\begin{abstract}
We present a re-parameterization of  vector autoregressive moving average  (VARMA) models that allows estimation of  parameters under the constraints of causality and invertibility. The parameter constraints associated with a causal invertible VARMA model are highly complex. Currently there are no procedures that can maintain the constraints in the estimated VARMA process, except in the special case of a vector autoregression (VAR), where some moment based causal estimators are available. Even in the VAR case, the available likelihood based estimators are not causal. The maximum likelihood estimator based on the full likelihood that does not condition on the initial observations  by definition satisfies the causal invertible constraints but optimization of the likelihood under the complex constraints is an intractable problem. The commonly used Bayesian procedure for VAR often has posterior mass outside the causal set because the priors are not constrained to the causal set of parameters. 

We provide  an exact mathematical solution to this problem. An $m$-variate VARMA$(p, q)$ process contains $(p+ q) m^2 + \binom{m+1}{2}$ parameters, which must be constrained to a subset of Euclidean space in order to guarantee causality and invertibility.  This space is implicitly described in this paper, through the device of parameterizing the entire space of  block Toeplitz matrices in terms of  positive definite matrices and  orthogonal matrices. The parameterization has connection to Schur- stability of polynomials and the associated Stein transformation that are often used in dynamical systems literature. As an important by-product of our investigation,  we generalize a classical result in dynamical systems to provide a characterization of Schur stable matrix polynomials. 

The stable parameterization has many advantages. Primarily, it allows a convenient way for obtaining likelihood based estimators under constraints. Such estimators can have positive impact on how forecasts are made based on VARMA models. 
Also, it will be an important tool for understanding the state of a linear dynamical system when the system is known to be stable. \\
\\
\noindent{\bf{keywords}}: Constrained estimation;Block Toeplitz matrix;Schur stability;reparameterization
\end{abstract}

\section{Introduction}

This article develops a method for estimating the parameters of a general vector autoregressive moving average (VARMA) model under the constraint that the estimated process is causal and invertible. To our knowledge there are no existing procedures or software that maintain the restrictions of causality and invertibility in the estimation process (except in the special case of vector autoregression where Yule-Walker estimators are guaranteed to be stable).   This paper fills that gap by producing a parameterization of the process that automatically maintains the constraints during estimation. The precise definition of causality and invertibility in the context of VARMA models is given later in Section~2.  Based on the fact that VARMA models are ubiquitous in time series applications and that  causality and invertibility are often imposed on the model (but not always enforced during the estimation process due to the complexity of the constraints) it seems important to be able to estimate VARMA processes under such constraints.

Vector autoregressive moving average models are the most common multivariate time series models \cite{lutkepohl} that find use in many fields, including macroeconomics, econometrics, dynamical systems and the  physical sciences. The subclass of vector autoregressive (VAR) models are popular models that have been advocated especially in macroeconomic literature \cite{sims1980} as appropriate choices for multiple time series models. While the univariate autoregressive moving average (ARMA) models are very popular due to their interpretability, the multivariate version provides enormous flexibility in terms of understanding the lag-dependence structures in multiple time series data;  they also provide better forecasts than univariate models in many cases.  In many applications of VARMA processes, especially in dynamical system estimation, the underlying process is assumed to be stable. Here we use stability as a general term, but more precise definitions in the context of VARMA models will be given later. Stability in the estimated process is often desired but sometimes not achieved due to the lack of procedures that guarantee stability of estimated VARMA processes.  Failing to impose stability in the estimated process can have adverse consequences, particularly when one is interested in understanding the long term behavior of the process, as is the case with long-horizon forecasting.

The subclass of autoregressive processes, due to their  linear lag structure,  allows parameter estimation via least squares, a great advantage over  more general autoregressive moving average (ARMA) models. Linearity makes it possible to develop estimators that operate under the assumption that the process is stable, an advantage shared by the multivariate version as  well. Within the frequentist paradigm, there are least squares based algorithms such as Yule-Walker, and Burg's algorithm that are guaranteed to estimate a stable  process. The unconstrained maximum likelihood estimator (MLE) obtained from the full stationary likelihood that accounts for the contribution of the initial observations is also guaranteed to be in the stable region.  It is known \cite{fuller} that likelihood based estimators generally have better finite sample performance than moment based estimators provided the assumed likelihood is approximately correctly specified and the full likelihood, that includes the contribution of the initial observations, is considered. Due to  nonlinearity introduced by the contribution of the initial observations to the likelihood equations, estimation using the full likelihood is considerably more challenging than those involving the conditional likelihood that condition on the initial observations. 
However,  the conditional likelihood is not  a stationary likelihood and hence estimators obtained based on the conditional likelihood, - either conditional maximum likelihood estimators which reduce to the ordinary least squares estimator for a Gaussian likelihood or Bayesian VAR estimators - fail to impose stability in the estimated process. While there are at least some procedures available in the VAR setting that allow stable estimation, there are none in the general VARMA case. The main reason is the extreme complexity of the VARMA likelihood and that of the constraints on the parameters under causality and invertibility. 

There is a long literature of likelihood computation and optimization for the VARMA model, including univariate ARMA models. Before modern computing power, researchers often developed various approximations for the likelihood under which approximate MLE were obtained and their properties were studied; \cite{whittle1951, durbin, tunnicliffe_wilson1973, ansley, nicholls1979, hannan_rissanen, godolphin, koreisha_pukkila, reinsel_etal1992, deFrutos, mauricio_1995, mauricio_1997, mauricio_2002, metaxoglou_smith}  provide an EM type algortihm along with a state-space formulation that makes likelihood computation considerably faster and improves convergence. However, none of the procedures guarantee that the estimated VARMA process is causal and invertible.  

Along with maximum likelihood estimation of VARMA models,  much attention has also been devoted toward Bayesian autoregression (BVAR) and prior specification for  such models. The popular choices for prior specification include those described in \cite{litterman, doan_etal,  kadiyala_karlsson1997, sims_zha1998}. These are generally priors on the coefficients of the regression equations in the VAR representation and they follow normal inverse chi-square/wishart type specifications. The supports of these prior distributions on the coefficients of the VAR models are generally the entire Euclidean space, and hence there is positive probability that the posterior estimates may lie outside the constraint set determined by causality and invertibility.  Depending on the sample size and the dimension of the parameter space, significant posterior probability may exist for estimating processes that are not stationary. This is highly undesirable in applications where long-term behavior of the underlying stationary system is being estimated. Recent interest in Bayesian macroeconomics has spawned several research articles in BVAR; see \cite{koop_korobilis}.  

 To avoid numerical complexities and instabilities of constrained optimization or constrained prior specification, one can re-parameterize the problem via transformation in a way such that the new parameters are unconstrained.  It will be convenient to designate the term `pre-parameters' to describe the re-parameterized parameters. The term `pre' is used to convey the fact that inference is obtained on the original parameters (or simply `parameters') based on the analysis of the pre-parameters. For example, in a Bayesian scheme the prior on the parameters are only those induced by the specified priors on the pre-parameters. Previous attempts of such pre-parameterization for univariate ARMA models include \cite{wise1956,barndorff_neilsen, marriott_smith, Quenneville_mcleod1992}. 

 It is the contention of the authors that in  complex constrained parameter problems, it is much easier to carry out inference under suitable parameter transformations.  Transformation of  parameters to quantities that are free or nearly free of constraints often allow feasible solutions to complicated constrained inference problems. There are numerous instances of such approaches in statistics. Parameter transformation has been successfully used in many complicated constrained  estimation problems, such as estimation of covariance matrices under positive-definiteness constraints \cite{lindstrom_bates, leonard_hsu, pinheiro_bates1996}, order-constrained parameters \cite{dunsonneelon} and estimation of parameters under general polyhedral constraints \cite{danaher_etal}. In this article, we provide a parameterization of the VARMA process that naturally maintains the stability constraints. More generally,  we would like to advocate the general principle of parameter transformation as a  means of making inference under complicated parameter constraints.

\section{Causal Invertible VARMA Process}
In this section we review the VARMA models and motivate the need for reparameterization in estimation of stable VARMA models. 
\subsection{Background}
 Consider a mean zero stationary VARMA$(p, q)$  process of dimension $m$, denoted by $\{ X_t \}$, where the autocovariance matrices are given by $\Gamma (k) = \mathbb{E} [ X_{t} X_{t-k}^{\prime} ]$ for $k \geq 0$.  Let the autoregressive and moving average coefficient matrices be denoted by $\Phi_1, \cdots, \Phi_p$ and $\Theta_1, \ldots, \Theta_q,$ respectively.  Thus, we have the process defined by  the relation
\be
 \Phi(B) X_t = \Theta(B) Z_t
\lb{varma}
\ee
 for each $t$, where the innovations, $\{ Z_t \},$ are uncorrelated mean zero variables with covariance matrix $\Sigma$, $B$ is the back-shift operator and for any complex number $z \in \mathbb{C}$, $\Phi(z)$ and $\Theta(z)$ are polynomials defined as 
\beqn
\Phi(z) &=& 1_m - \Phi_1 z - \cdots - \Phi_p z^p, \nonumber \\
\Theta(z) &=& 1_m + \Theta_1 z + \cdots + \Theta_q z^q,
\lb{polynomials}
\eeqn
with $1_m$ denoting the identity matrix of dimension $m.$ The process is called a $causal$ VARMA process if \eqref{varma} has exactly one stationary solution of the form $X_t = \Psi(B) Z_t$ where $\Psi(z) = \sum_{j=0}^\infty \Psi_j z^j,\; z \in \mathbb{C}$  for a sequence of coefficient matrices $\{\Psi_j, j\geq 0\}$. Causality is a desirable property because it makes the process independent of future innovations, thereby making it possible to forecast ahead based on current and past observations. In terms of the polynomial $\Phi(z)$, the process is causal iff $det(\Phi(z)) \ne 0,$  for all $ z \in \mathbb{C}$ such that $|z| \leq 1$ (\cite{brockwell_davis} Theorem 11.3.1).  For what follows it will be convenient to characterize the causal process in terms of the associated polynomial ${\tilde{\Phi}}(z)$ defined as 
\[   {\tilde{\Phi}}(z) := z^{p}\Phi(z^{-1}) = z^p - \Phi_1 z^{p-1} - \cdots - \Phi_p.\]
Thus, a VARMA process defined by \eqref{varma} is causal iff $det({\tilde{\Phi}}(z)) \ne 0,$ for all $z \in \mathbb{C}$ such that $|z| \geq 1,$ i.e. all roots of ${\tilde{\Phi}}(\cdot)$ lie within the open unit disc $\mathcal{D} = \{z \in \mathbb{C}: |z| < 1\}. $  
Similarly, define 
\[{\tilde{\Theta}}(z) := z^q\Theta(z^{-1}) = z^q + \Theta_1 z^{q-1} + \cdots + \Theta_q.\]
  The VARMA$(p, q)$ is called {\it invertible} if, based on \eqref{varma}, the innovation process $Z_t$ can be given the representation $Z_t = \Pi(z) X_t$ in terms of a stationary solution $X_t,$ where $\Pi(z) = \sum_{j=0}^\infty \Pi_j z^j, z \in \mathbb{C}.$ Invertibility of the process is equivalent to the property that all roots of ${\tilde{\Theta}}(z)$ lie within the unit disc. 
A VARMA$(p, 0)$ process will be referred to as a VAR$(p)$ process (vector autoregression of order $p$) and a VARMA$(0, q)$ process will be referred to as a VMA$(q)$ process (vector moving average of order $q$). 

We will refer to ${\tilde{\Phi}}(z)$, ${\tilde{\Theta}}(z)$ and $\Sigma$ as the parameters of the VARMA$(p, q)$ process defined by \eqref{varma}. We will refer to ${\tilde{\Phi}}(z)$ as the autoregressive polynomial and to ${\tilde{\Theta}}(z)$ as the moving average polynomial associated with the VARMA representation \eqref{varma} and when it is clear from the context we will interchangeably refer to the associated coefficient matrix sequence $\Phi = (\Phi_1, \ldots, \Phi_p)$ and $\Theta = (\Theta_1, \ldots, \Theta_q)$ as the parameters as well. 
Before describing the parameter space of a causal invertible VARMA process, we introduce some notations.  Let $\loeeq$ denote the Loewner partial ordering for positive semi-definite matrices, i.e. for $m\times m$ matrices $A$ and $B$, $A \loeeq B$ means that $ A - B $ is a positive semi-definite matrix and $A \loeeq 0$ means that $A$ is positive semi-definite. The strict inequalities $A \loe B$ and $A \loe 0$ would mean that $A - B$ is positive definite and $A$ is positive definite, respectively.
Define
\be
{\mathscr{S}}^m_{++} =  \{ \Sigma \in {\mathscr{S}}^m : \Sigma \loe 0 \}
\ee
to be the set of all $m\times m$ symmetric positive definite matrices that constitute the interior of the convex cone, ${\mathscr{S}}^m_{+},$  of $m\times m$ positive semi-definite matrices in $ {\mathscr{S}}^m,$  the set of $m \times m$ symmetric matrices. 
A matrix polynomial $A(z) = z^k - A_1z^{k-1} - \cdots - A_k, $ will be called {\it Schur-Stable} if all roots of $A(z)$ lie within the unit disc $\mathcal{D}.$ Such polynomials are common in the dynamical systems literature \cite{bhatia,  kaszkurewicz_bhaya} and the study of such polynomials is closely associated with the stability of dynamical systems. Let 
\beqn
\mathfrak{S}^m_k &=& \{  A(z) = z^k - A_1z^{k-1} - \cdots - A_k: A_r \in \mathbb{R}^{m\times m}, \\
&& \qquad \qquad  r \geq 1,   \mbox {and } A(z) \mbox{ is Schur-Stable}\}  \nonumber 
\eeqn
define the set of all $m-$dimensional Schur-Stable matrix polynomials of degree $k$. 
 Let any polynomial $A(z) = z^k - A_1 z^{k-1} - \cdots - A_k$ be associated with the coefficient sequence $A = \left[ A_1, \ldots, A_k \right].$  Thus, when it is clear from the context we will define a sequence of matrices $A = \left[ A_1, \ldots, A_k\right]$ to be Schur-Stable provided the associated polynomial is Schur-Stable.

The innovation variance $\Sigma$ lies in ${\mathscr{S}}^m_{++}.$ The autoregressive polynomial ${\tilde{\Phi}}(z)$ and moving average polynomial ${\tilde{\Theta}}(z)$  for a causal  invertible VARMA$(p$, $ q)$ process will belong to $\mathfrak{S}^m_p$ and $\mathfrak{S}^m_q$, respectively. Thus, the parameters $(\Phi, \Theta, \Sigma)$ of an $m$-dimensional causal invertible VARMA$(p, q)$ process belongs to the parameter space given by
\be
\mathfrak{P} = \mathfrak{S}^m_p \times \mathfrak{S}^m_q \times {\mathscr{S}}^m_{++}.
\lb{parm}
\ee

\subsection{Estimation and Need for Reparameterization}

While modeling a multivariate time series using a (mean zero) VARMA$(p, q)$ process, one often makes distributional assumptions on the innovation process $Z_t$. Most often, the innovations are assumed to be Gaussian, i.e. $Z_t \sim N(0, \Sigma).$ Based on this assumption, a likelihood for the parameters $(\Phi, \Theta, \Sigma)$ can be written down  and used for likelihood based inference. Under the assumption of stationarity, for any $p \geq 0,$ define
\be
\Gamma_p = \begin{pmatrix}
\Gamma(0) & \Gamma(1) & \cdots & \Gamma(p) \\
\Gamma(-1)& \Gamma(0) & \cdots & \cdot              \\
\vdots        &  \ddots        & \ddots & \vdots             \\
\Gamma(-p) & \cdots     & \Gamma(-1) & \Gamma(0)
\end{pmatrix}
\lb{gammap}
\ee
to be the covariance matrix of $ \mbox{Vec} [ X_{t}, X_{t-1}, \cdots, X_{t-p}]$ where $Vec$ is the operation of stacking the column vectors following the $Vec$.  Thus, the  $jk$th block of $\Gamma_p$ is an $m\times m$ matrix, given by $\Gamma (k-j) = \Gamma^{\prime} (j-k)$, for $1 \leq j,k \leq (p+1)$. Assuming that the innovations are normally distributed, a stationary likelihood for $(\Phi, \Theta, \Sigma)$ based on a sample $X^{\prime} = (X_n, \ldots, X_1)$ (written in the reverse order for notational consistency) is 
\be
\mathcal{L}(\Phi, \Theta, \Sigma) = (2\pi)^{-n/2}[det(\Gamma_{n-1})]^{-1/2} \exp{(-0.5X^{\prime}\Gamma_{n-1}^{-1}X)}.
\lb{lkhd}
\ee
where $\Gamma_{n-1}$ is a function of $(\Phi, \Theta, \Sigma).$
An available likelihood immediately facilitates estimation. Maximum likelihood estimators of $(\Phi, \Theta, \Sigma)$ can be obtained by maximizing the likelihood in terms of the parameters or Bayesian posterior estimates can be obtained based on priors specified on  the range of $(\Phi, \Theta, \Sigma)$

Causality and invertibility are natural conditions that are often imposed on the process while using a VARMA$(p, q)$ to model a multivariate time series. Thus, the likelihood \eqref{lkhd} is often implicitly written with a causal invertible process in mind. In such a situation, it is natural to expect that the estimation procedure would honor the constraints imposed by the model and in particular the estimated VARMA$(p, q)$ process will satisfy the conditions of causality and invertibility.  Such estimators are obtained by maximizing the likelihood \eqref{lkhd} over the parameter space $\mathfrak{P}$  or by specifying a prior distribution over the same parameters space. Thus, in principle likelihood based estimation and inference of causal invertible VARMA processes can be carried out while maintaining the constraints imposed by causality and invertibility.  The reality is however very far from this idealized situation. To our knowledge there exist no current estimation procedure or software package guaranteeing  that the estimated VARMA process is causal and invertible.  

The Schur-Stable space $\mathfrak{S}_k^m$ is essentially described via the roots of the matrix polynomials. The roots are highly non-linear functions of the coefficients $\Phi$ and $\Theta,$ and often are implicitly defined. Thus, maximization of the likelihood \eqref{lkhd} over the parameter space $\mathfrak{P}$ is not a feasible proposition  when optimization is done in terms of the coefficient matrices $\Phi$ and $\Theta$. Overall, direct maximization of \eqref{lkhd} under the constraints on the parameters seems to be a computationally intractable problem. 

 For Bayesian estimation that guarantees causal invertible estimates, one has to specify priors that are fully supported on $\mathfrak{P}.$ Such a prior may not be readily available due to the complexity of the constraint set. The available option seems to be limited.  The obvious, though not entirely satisfactory, choices for prior specification under constraints seem to be the following:\\

\noindent (I) One could specify a flat prior on $\mathfrak{P}$. Even though the roots associated with the polynomials in  $\mathfrak{S}^m_k$ are in bounded sets, it is not immediately clear if such specification provides a proper  posterior for the coefficients. Moreover, as  a Bayesian one would like more flexibility in prior specification. \\

\noindent (II) One could specify a prior over the ambient unrestricted parameter space, draw posterior samples using the unrestricted prior and then discard any sample that do not fall within the constraint set \cite{gelfand_etal}. The advantage of this method is that the induced prior is  fully supported on the constraint space and there are a lot more options for unrestricted priors, e.g. conjugate priors, so that  posterior draws from the unrestricted posterior can be obtained relatively easily. But the major drawback of the procedure is computational efficiency. In terms of the unrestricted prior, if  the measure of the constraint set is small relative to the support of the prior, most of the posterior draws may fall outside the constraint set and will  be discarded, thereby making the posterior sampling extremely inefficient. This is almost certainly the case in the VARMA setting, specially if $m$, $p$ and $q$ are moderately large. \\

\noindent (III) To increase computational efficiency in the truncation method, instead of discarding the samples that fall outside the constrained set, one could project them back to the set, thereby making sure that posterior samples are accumulating at a fast rate; \cite{gunn_dunson,roy_etal2012}.  A difficulty with this procedure is that if the constraint set is not convex the projection may not be uniquely defined.  Regardless, this method accumulates posterior mass at the boundary of the constraint space which is not desirable.  In smaller samples with a relatively small volume of the constraint space, it is possible that all of the posterior mass is  accumulated on the boundary and the posterior estimates maybe inadmissible. There have been illustrations and discussions of scenarios where it is desirable to have priors with no mass on the boundary; \cite{gallindo_garre, chung_etal}. In the VARMA example, having mass on the boundary means that the prior is entertaining non-stationary models even though the true model is assumed to be a stationary causal model.

Thus, it appears that neither direct maximization of the likelihood nor direct prior specification are possible when the parameter space is $\mathfrak{P}.$
We provide a solution to this problem. 
 In particular, we build a   flexible parameterization of the VARMA problem that makes it possible to carry out estimation  under the restriction of causality and invertibility and overcomes most of the difficulties discussed earlier in the section regarding likelihood based inference for causal invertible VARMA processes.

\section{First order vector autoregression}
The vector autoregression  of order one, VAR$(1),$ is arguably the simplest model in terms of parameter estimation and inference in the class of VARMA$(p, q)$ models. To fix ideas and to illustrate the pitfalls of the existing likelihood based methods, we start with the VAR$(1)$ model. Moreover, the tools developed for the VAR$(1)$ case will be directly applicable to the more general VARMA case discussed in the later sections. 
\subsection{Reparameterization of VAR(1)}
Consider an $m$-dimensional  VAR(1) process $\{ X_t\}$: 
\be
X_t = \Phi_1 X_{t-1} + Z_t.
\lb{var1}
\ee
For the purpose of writing a likelihood  we assume the innovations are normally distributed. 
The parameters of interest are the $m\times m$ coefficient matrix $\Phi_1$ and the $m\times m$ covariance matrix $\Sigma$.

The VAR$(1)$ system is causal iff the roots of ${\tilde{\Phi}}(z) = z - \Phi_1$ are all within the unit disk $\mathcal{D}$ or equivalently all eigenvalues, $\lambda_1(\Phi_1), \ldots, \lambda_m(\Phi_1),$  of the matrix $\Phi_1$ are less than one in absolute value. Thus, for the VAR$(1)$, with a slight abuse of notation, we define the parameter space associated with a  causal polynomial to be 
\[ {\mathfrak{S}}^m_1 = \{  \Phi_1 \in \mathbb{R}^{m\times m}:  |\lambda_j(\Phi_1)| < 1, \; j=1,\ldots,m\}.\]
The space ${\mathfrak{S}}^m_1$ forms a submanifold of $\mathbb{R}^{m\times m}$ and is complicated in nature. The space is defined via the eigenvalue restrictions. The eigenvalues are highly nonlinear functions of the elements of $\Phi_1$ and often are not available in explicit form. Thus, the description of the parameter space is extremely complex, involving multiple nonlinear inequality constraints in terms of the $m^2$ elements of $\Phi_1$, where the constraints are implicitly stated; see \cite{mcelroy_findley} for details. This description is generally cumbersome to use in any optimization procedure; nor is it easily generalized to cases with $p>1$.

The key quantity that motivates our reparameterization in the VAR$(1)$  is the observation that $\left( \Gamma(0), \Sigma, \Phi_1\right)$ satisfy the discrete algebraic Riccati  system, 
\be
\Gamma(0) = \Phi_1 \Gamma(0) \Phi_1^{'} + \Sigma.
\lb{riccati}
\ee
It can be shown that any solution of the system for given positive-definite matrices $\Gamma(0) 
\loeeq \Sigma \loe 0$ will be Schur-Stable. 
The equation is satisfied by the Yule-Walker solution in the VAR$(1)$ process, $\Gamma(1)\Gamma(0)^{-1}$, which is known to be Schur-Stable. More generally, the result can be cast in terms of general transformations of symmetric matrices and their relationship to matrix stability. 
For any square matrix $A \in \mathbb{R}^{m\times m}$ and any symmetric matrix $U \in {\mathscr{S}}^m$ define the transformation $S(A, U) : \mathbb{R}^{m\times m} \times {\mathscr{S}}^m \rightarrow {\mathscr{S}}^m,$ by 
\be S(A, U) = U - AUA^{\prime}.
\lb{stein}
\ee
For any fixed $A \in \mathbb{R}^{m\times m},$ the  $A-Section$ of the transformation defined by $S_A(U) = S(A,U)$ is an automorphism on  ${\mathscr{S}}^m $ and is known as the {\it Stein} transformation. 
Thus,  for the VAR$(1)$ with stationary variance $\Gamma(0)$ and innovation variance $\Sigma,$ $S({\Phi_1},\Gamma(0)) = \Sigma.$ The Stein transformation has been extensively studied in the dynamical systems literature in relation to stability of discrete dynamical systems. 
Stein (1952) showed that $S(A,U) \in {\mathscr{S}}^m_{++}$  for some $U \in {\mathscr{S}}^m_{++}$  iff $A \in \mathfrak{S}^m_1.$ Stein's result implies that one could characterize $\mathfrak{S}^m_1$ in terms of matrices in ${\mathscr{S}}^m_{++}$.  For any $M \in {\mathscr{S}}^m_{++}$ , the pre-image $A_M(U) = \{A : S(A, U) = M\}$ is non-empty as long as $U \loeeq M$ but  it need not be a singleton set. In fact, for any $M \in {\mathscr{S}}^m_{++}$ the entire Schur-Stable class can be generated by the pre-images as $U$ varies over the class of positive definite matrices, i.e., $\mathfrak{S}^m_1  =  \bigcup_{U \in {\mathscr{S}}^m_{++}}  A_M(U).$ This is immediate since given $M \in  {\mathscr{S}}^m_{++}$ and $A \in \mathfrak{S}^m_1,$ one can solve for $U$ as 
\[ Vec(U) = (1_{m^2} - A\otimes A)^{-1}Vec(M),\]
where $\otimes$ is the kronecker product. 
 Since the pre-images are non-empty only when $U \loeeq M$, we have 
\be
\mathfrak{S}^m_1 = \bigcup \{A_M(U) : U \in {\mathscr{S}}^m_{++}, \; { U \loeeq M} \}. 
\lb{span_var1}
\ee
For any fixed $M \in {\mathscr{S}}^m_{++}$, the relation \eqref{span_var1} allows us to parameterize the entire Schur-Stable class $\mathfrak{S}^m_1$ in terms of  elements of ${\mathscr{S}}^m_{++}.$  However, since the pre-images $A_M(U)$ are not necessarily singletons, we need to introduce additional parameters that can characterize the pre-images uniquely. 
Before we state our result, we introduce further notation.  Let for $r \leq m,$    $\nu_{r,m}$ denote  the Stiefel manifold of $r\times m$ semi-orthogonal matrices.  In the special case when $r$ equals $m$, the  set is  the orthogonal group ${O}(m)$ of $m\times m$ orthogonal matrices. The special orthogonal group $SO(m)$ is the set of matrices in ${O}(m)$ with determinant equal to one. Then we have the following result that characterizes the set of Schur-Stable matrices.

\begin{proposition}
Let $M  \in {\mathscr{S}}^m_{++}$ be given. Then there exists $A \in \mathfrak{S}^m_1$ and $U \in   {\mathscr{S}}^m_{++}$  such that $S(A,U) = M$  iff $ U \loeeq M$ and $A = (U - M)^{1/2} Q U^{-1/2}$ for some $r\times m$ matrix $Q \in \nu_{r,m}$ where $r = rank(U - M)$, $(U - M)^{1/2}$ is a full column rank square root of $(U - M)$ and $U^{-1/2}$ is a square root of $U^{-1}$.  
\label{prop1}
\end{proposition}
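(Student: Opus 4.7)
The plan is to prove both directions of the equivalence by recasting the Stein equation $U - A U A' = M$ as a matrix factorization problem $C C' = B B'$ and then classifying all such factorizations.

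For the forward direction, assume $A \in \mathfrak{S}^m_1$ and $U \in \mathscr{S}^m_{++}$ satisfy $S(A, U) = M$. Rearranging gives $A U A' = U - M$, and positive semi-definiteness of the left side immediately forces $U - M \loeeq 0$, i.e.\ $U \loeeq M$. Let $r = \text{rank}(U - M)$, set $B = (U - M)^{1/2}$ (an $m \times r$ matrix of full column rank with $B B' = U - M$), and let $V = U^{-1/2}$ denote the symmetric positive-definite square root of $U^{-1}$, so $V = V'$, $V^2 = U^{-1}$, and $V U V = I_m$. Set $C = A V^{-1}$; then
\[
C C' = A V^{-1} (V^{-1})' A' = A V^{-2} A' = A U A' = B B'.
\]
The key algebraic step is to show $C = B Q$ for some $Q \in \nu_{r,m}$. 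Since for any real matrix $N$ one has $\text{range}(N N') = \text{range}(N)$ (the two have the same rank and one is contained in the other), the matrices $C$ and $B$ share the same column space, so $C = B Q$ for some $r \times m$ matrix $Q$. Substituting back yields $B (Q Q' - I_r) B' = 0$, and invertibility of $B' B$ (from full column rank of $B$) forces $Q Q' = I_r$, i.e.\ $Q \in \nu_{r,m}$. Finally $A = C V = B Q V = (U - M)^{1/2} Q U^{-1/2}$, as asserted.

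For the reverse direction, given $U \in \mathscr{S}^m_{++}$ with $U \loeeq M$ and $A = (U - M)^{1/2} Q U^{-1/2}$ for some $Q \in \nu_{r,m}$, a direct expansion using $V U V = I_m$ and $Q Q' = I_r$ collapses $A U A'$ to $(U - M)^{1/2} ((U - M)^{1/2})' = U - M$, so $S(A, U) = U - (U - M) = M \in \mathscr{S}^m_{++}$. Schur stability of $A$ then follows from Stein's theorem quoted earlier in the section: since $S(A, U) \in \mathscr{S}^m_{++}$ for some $U \in \mathscr{S}^m_{++}$, the matrix $A$ must lie in $\mathfrak{S}^m_1$.

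The principal technical obstacle is the implication $C C' = B B' \Rightarrow C = B Q$ with $Q \in \nu_{r,m}$. Matching column spaces gives existence of $Q$, but extracting the semi-orthogonality relies on cancelling $B$ via $(B' B)^{-1} B'$, which requires $B$ to have full column rank. This is precisely why the rank $r$ of $U - M$ enters the statement in the form $Q \in \nu_{r,m}$ rather than $Q \in O(m)$: when $U - M$ is rank-deficient, the ``extra'' degrees of freedom of a full $O(m)$ rotation are invisible in $A = B Q U^{-1/2}$ because $B$ projects them out. Apart from this step, the argument is bookkeeping once one fixes the convention of symmetric square roots so that $V U V = I_m$.
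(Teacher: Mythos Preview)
Your proof is correct and follows essentially the same approach as the paper's: the ``if'' direction by direct substitution (with Stein's theorem supplying Schur stability), and the ``only if'' direction by rewriting $AUA' = U-M$ as $CC' = BB'$ with $C = AU^{1/2}$, $B = (U-M)^{1/2}$, and then extracting the semi-orthogonal factor. Your treatment of the factorization step is in fact more careful than the paper's terse version---you argue in the correct direction $C = BQ$ (rather than $B = CQ_1'$) and spell out why full column rank of $B$ is what makes the cancellation $B(QQ'-I_r)B'=0 \Rightarrow QQ'=I_r$ go through.
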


To see how Proposition~\ref{prop1} provides a characterization of $\mathfrak{S}^m_1$, we fix $M \in  {\mathscr{S}}^m_{++}$ and  define $V := V(U)  =  U - M$ for any $U \loe M$ (for the illustration we only consider the full rank case, i.e. $U \loe M$  but parametrization in the case of $U \loeeq M$ will be immediate from the description.) 

Then  from  Proposition \ref{prop1}, we obtain the alternative parametrization of $A \in \mathfrak{S}^m_1$  in terms of  $(V,  Q)$  where $V$ is any positive definite matrix and $Q$ is any orthogonal matrix. Note that the number of free parameters in this parameterization is $\binom{m+1}{2}$ for $V$ and $\binom{m}{2}$ for $Q$. Thus, the total number of free parameters is $\binom{m+1}{2} + \binom{m}{2} = m^2$, the same as that in $A$. More importantly, the transformation $\varphi$ taking $A$ to its pre-parameters $(V(A), Q(A))$ is a bijection between ${\mathscr{S}}^m_{++}\times O(m)$ and $\mathfrak{S}^m_1.$ 

Consider  the map $\varphi$  and its inverse $\vartheta$  (the mappings depend on the choice of $M$, but we will assume that $M$ is fixed throughout and suppress the dependence for notational simplicity) defined as 
\beqn
A &\xrightarrow{\varphi}& (V,Q),\\
 (V,Q) &\xrightarrow{\vartheta}& A.
\lb{maps}
\eeqn
The formulas for $\varphi$ and $\vartheta$ are 
\beqn
  V(A) & = & \sum_{j \geq 1}A^jM A^{\prime j}, \nonumber \\
  Q(A)& = & {\left( \sum_{j \geq 1} A^j M A^{\prime j}
  \right)}^{-1/2}A { \left( \sum_{j \geq 0}A^j M A^{\prime j}
  \right) }^{1/2}, \nonumber \\
A(V, Q) & = & V^{1/2} Q {(V + M)}^{-1/2}.
\lb{maps}
\eeqn

\begin{proposition}
 Let $\varphi$ and $\vartheta$ be as defined in \eqref{maps}. Then $\varphi \circ \vartheta = \mbox{id} = \vartheta
 \circ \varphi$, so that the map is a bijection.  
\lb{prop2}
\end{proposition}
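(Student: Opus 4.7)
The plan is to verify both compositions equal the identity by direct algebraic computation, throughout using the symmetric positive-definite square root convention (so that $U^{-1/2}$ genuinely inverts $U^{1/2}$ and so that the defining series for $V(A)$ yields a symmetric matrix whose canonical square root is used in $Q(A)$). A key preliminary observation is that since $A \in \mathfrak{S}^m_1$, the series $\sum_{j \geq 0} A^j M A'^j$ converges and equals $V(A) + M$; one checks directly that $A(V(A)+M)A' = V(A)$, so $U := V(A) + M$ satisfies the Stein equation $S(A,U) = M$.

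For $\vartheta \circ \varphi = \mathrm{id}$, I start with $A \in \mathfrak{S}^m_1$ and form $V := V(A)$ and $Q := Q(A)$. First I verify $Q \in O(m)$: using $A(V+M)A' = V$ derived above,
\[
QQ' \;=\; V^{-1/2} A (V+M)^{1/2} \,(V+M)^{1/2} A' V^{-1/2} \;=\; V^{-1/2} A(V+M)A' V^{-1/2} \;=\; V^{-1/2} V V^{-1/2} \;=\; I.
\]
Then I plug into $\vartheta$:
\[
\vartheta(V,Q) \;=\; V^{1/2} Q (V+M)^{-1/2} \;=\; V^{1/2}\bigl(V^{-1/2} A (V+M)^{1/2}\bigr)(V+M)^{-1/2} \;=\; A,
\]
where the collapse uses that symmetric square roots commute with their inverses in the expected way.

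For $\varphi \circ \vartheta = \mathrm{id}$, I start with $(V,Q) \in \mathscr{S}^m_{++} \times O(m)$ and set $A := V^{1/2} Q (V+M)^{-1/2}$. By Proposition~\ref{prop1}, $A \in \mathfrak{S}^m_1$. A direct computation using $QQ' = I$ gives
\[
A(V+M)A' \;=\; V^{1/2} Q (V+M)^{-1/2}(V+M)(V+M)^{-1/2} Q' V^{1/2} \;=\; V^{1/2} Q Q' V^{1/2} \;=\; V,
\]
so $U := V+M$ satisfies $S(A,U) = M$. By the standard uniqueness of the positive-definite solution of the discrete Lyapunov equation for Schur-stable $A$ (which forces $U = \sum_{j\geq 0} A^j M A'^j$), one concludes $V = V(A)$. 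Then
\[
Q(A) \;=\; V(A)^{-1/2} A\,(V(A)+M)^{1/2} \;=\; V^{-1/2}\bigl(V^{1/2} Q (V+M)^{-1/2}\bigr)(V+M)^{1/2} \;=\; Q.
\]

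The only delicate point is the consistency of square root choices: Proposition~\ref{prop1} is stated for an arbitrary full-column-rank square root of $U - M$ and an arbitrary square root of $U^{-1}$, but for the maps $\varphi$ and $\vartheta$ to be honest inverses one must fix a single convention (the symmetric positive-definite branch), since changing branches reshuffles $Q$ by an orthogonal factor. Once that convention is in force, the verifications above are purely mechanical, and the uniqueness of the Lyapunov solution for Schur-stable $A$ is the only nontrivial fact invoked.
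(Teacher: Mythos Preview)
Your proof is correct and follows essentially the same route as the paper's: both directions are verified by direct substitution, and the key nontrivial step in showing $\varphi\circ\vartheta=\mathrm{id}$ is the uniqueness of the positive-definite solution to the discrete Lyapunov (Riccati) equation for Schur-stable $A$. The only organizational difference is that the paper isolates this uniqueness fact as a separate lemma (proved via $\mbox{vec}$ and invertibility of $1_{m^2}-A\otimes A$), whereas you invoke it as standard; you are also more explicit than the paper about the square-root convention and about checking $Q(A)\in O(m)$.
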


The advantages of having a parameterization of $\mathfrak{S}^m_1$ in terms of  $(V, Q)$ is that we could maximize the likelihood defined over $\mathfrak{S}^m_1$ in terms of $(V, Q),$ or induce prior probability distributions over $\mathfrak{S}^m_1$ through the transformation $\vartheta$ and probability distributions for $V$ and $Q$. We show later in the general VARMA parameterization section how to optimize with respect to $V$ and $Q$ by using further transformations that reparameterize the positive definite matrix  parameter $V$ and the orthogonal matrix parameter $Q$ in terms of unrestricted real numbers.   We end this section by an illustration of the advantage of the parameterization $\varphi$ in terms of prior specification and by contrasting the approach with common prior specification approaches that disregard the constraints of causality. \\

 \subsection{Advantages of constrained prior based on  pre-parameterization}
Consider a two-dimensional Gaussian  VAR(1) process $X_t$ defined by \eqref{var1} with $m=2,$ and parameters 
\be
\Phi = \Phi_0 := \begin{pmatrix} \lambda & 0 \\ 2 &  \lambda \\  \end{pmatrix} \hspace{3mm} \Sigma = \Sigma_0 := \begin{pmatrix} 1 & 0 \\ 0 & 1 \\  \end{pmatrix}.
\lb{maxeigen}
\ee
we will take the roots of the process to be near the causal boundary, $\lambda = 1 - n^{-1}$ to illustrate the effect of unrestricted prior specification. Let $(X_1, \ldots, $  $X_{100})$ be a sample of size $n = 100$ from the process and let \eqref{lkhd}  be the associated likelihood. Let $\pi(\Phi, \Sigma)$ be a prior on the parameters. We will assume apriori $Vec(\Phi) \sim N(\Phi_0, 1_2)$ and independently $\Sigma \sim IW(\Sigma_0, 5+ \kappa)$ where $IW$ denotes the probability density of the inverse-Wishart distribution. The parameterization with $\kappa = 0.5$ produces a reasonably flat prior on $\Sigma$ with finite prior variance and also we center the prior for the coefficient at the true value. The class of prior belongs to the class of the standard normal-inverse-Wishart (NIW) prior speification for Bayesian VAR which includes the popular Minnesota prior \cite{litterman}  as a speicial case. The Bayes estimator of $\Phi$ is then obtained by standard Bayesian computation using the prior and the likelihood \eqref{lkhd}. Let ${\hat{\lambda}}_{1,U}$ denote the largest eigenvalue (in absolute value) of the Bayes estimator of $\Phi$ obtained using the NIW type unrestricted prior.  For the proposed Bayesian procedure with constrained priors, we induce priors on parameters via priors on the pre-parameters $(V,Q, \Sigma).$  The exact formulation is given in the general VARMA section and thus we omit the details here. Let  ${\hat{\lambda}}_{1,C}$  denote the largest eigenvalue (in absolute value) of the Bayes estimator of $\Phi$ obtained using the proposed method. 

Figure~\ref{minnesota}  shows the density histogram of  the maximum eigenvalues, ${\hat{\lambda}}_{1,U}$ and ${\hat{\lambda}}_{1,C}$ based on  400 Monte Carlo replications. The density for the posterior obtained from the unconstrained prior has about 30\% posterior mass outside the causal region while the proposed method is concentrated in the causal region. The left tails of distributions for the two estimators are reasonably close, but the unrestricted estimator has posteior eigenvalues with magnitude bigger than one. 

\begin{figure}[ht]
\centering
\includegraphics[width=120mm]{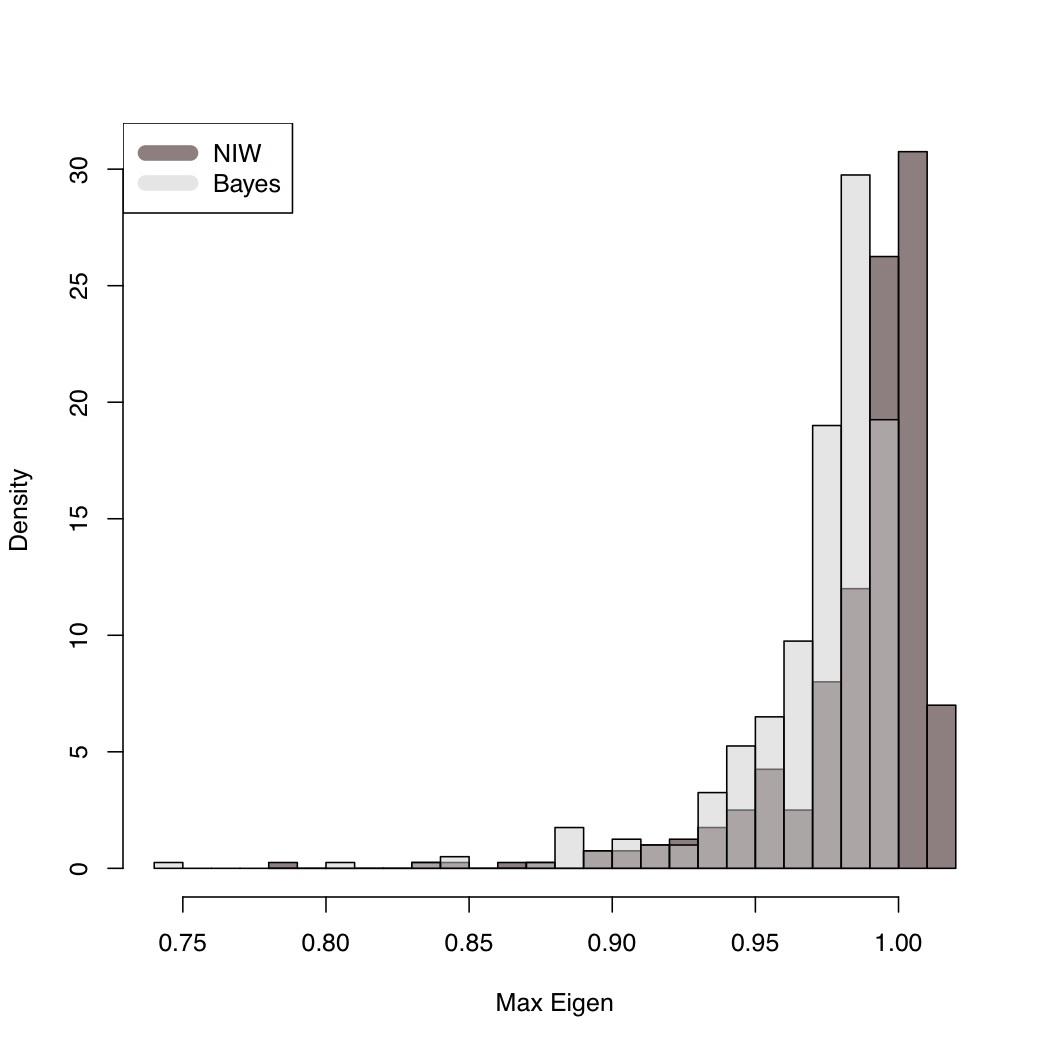}
\caption{Distribution  of the estimated  maximum eigen-value of the coefficient matrix in a VAR(1) model with true values given in \eqref{maxeigen} and sample size equal to 100. The lighter histogram corresponds to the Bayesian estimator based on the proposed parameterization while the darker histogram is for one with standard unrestricted priors on the parameters. The overlapping region between the histograms is shown with an intermediate shade. }
\lb{minnesota}
\end{figure}

\section{Parameterization of Causal Invertible VARMA$(p, q)$}

As discussed in the previous section, causality and invertibility of a VARMA$(p, q)$ process is related to Schur-Stability of the autoregressive and the moving average polynomials. Thus, to parameterize the VARMA$(p, q)$ process in a manner that guarantees causality and invertibility, we first establish characterization of Schur-Stable polynomials in $\mathfrak{S}^m_k$. The characterization is obtained in terms of positive definite  block Toeplitz matrices. the block Toeplitz operators provide a generalization to the Stein transformation used in the VAR$(1)$ example. 

\subsection{Block Toeplitz parameterization}
The following notation will be used hereafter.  For $t \geq 1,$  define ${\underline{U}}_t$ to be a symmetric block Toeplitz matrix defined as 
\be
{\underline{U}}_t = \begin{pmatrix}
U(0) & U(1) & \cdots & U(t) \\
U(1)^{\prime} & U(0) & \cdots & \cdot              \\
\vdots        &  \ddots        & \ddots & \vdots             \\
U(t)^{\prime} & \cdots     & U(1)^{\prime} & U(0)
\end{pmatrix}
\lb{blocktoeplitz}
\ee
where $U(0), U(1), \ldots, U(t)$ are arbitary $m\times m$ matrices and $U(0) \in {\mathscr{S}}^{m} $. Note that ${\underline{U}}_0 = U(0).$  For a block Toeplitz matrix ${\underline{U}}_t$ of the form \eqref{blocktoeplitz}, we define the {\it order} of ${\underline{U}}_t$ to be $t$. 
For $t \geq 1,$ we will use the following nested representations of ${\underline{U}}_t$ in terms of ${\underline{U}}_{t-1}$: The lower representation given by 
\be
{\underline{U}}_t = \begin{pmatrix}
U(0) & \xi_t^{\prime} \\
\xi_t & {\underline{U}}_{t-1}
\end{pmatrix} ,
\lb{lower_ut}
\ee
and the upper representation given by 
\be
{\underline{U}}_t =  \begin{pmatrix}
{\underline{U}}_{t-1} & \kappa_t \\
\kappa_t^{\prime} & U(0)
\end{pmatrix} ,
\lb{upper_ut}
\ee
where $\xi_t^{\prime}  = (U(1), \cdots, U(t))$ and $\kappa_t^{\prime} = (U(t)',\cdots, U(1)').$  We set  $\xi_0$ and $\kappa_0$ equal to zero matrices. The Schur complements of $U(0)$ in ${\underline{U}}_{t}$ in the two representations \eqref{lower_ut} and \eqref{upper_ut} are, respectively,
\be
C_t = U(0) - \xi_t^{\prime}{\underline{U}}_{t-1}^{-1}\xi_t,
\lb{lower_schur}
\ee
and
\be
D_t = U(0) - \kappa_t^{\prime}{\underline{U}}_{t-1}^{-1}\kappa_t.
\lb{upper_schur}
\ee
We  define $C_0 = D_0 = U(0).$ 
Also  let  $\mathfrak{T}^{m,k}$ denote the set of symmetric block Toeplitz matrices with $m$-dimensional blocks and order $k$, i.e.
\[ {\mathfrak{T}}^{m,k} = \{ {\underline{U}}_k \in  {\mathscr{S}}^{mk} : {\underline{U}}_k  \mbox{ is in the form } \eqref{blocktoeplitz} \} \]
Also define $ {\mathfrak{T}}^{m,k}_{++}$  to be the subset of $ {\mathfrak{T}}^{m,k}$ comprising the positive definite block Toeplitz matrices of order $k$ and $m$-dimensional blocks.

 \begin{thm}
  \label{thm:stabCond}
  An $m$-dimensional matrix polynomial $A(z) = z^k - A_1 z^{k-1}$ $  \cdots - A_k$ is Schur-Stable i.e., $A(z) \in \mathfrak{S}^m_k$  iff there exists a block Toeplitz positive definite matrix of the form ${\underline{U}}_{k}$ in \eqref{lower_ut} such that the coefficients $A = [A_1, \ldots, A_k]$ satisfy  $A = \xi_{k}' {\underline{U}}_{k-1}^{-1}.$
 \end{thm}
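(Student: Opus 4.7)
The plan is to reduce the Schur-stability of the matrix polynomial $A(z)$ to the first-order result (Proposition~\ref{prop1}) through the block companion matrix
\[
F \;=\; \begin{pmatrix} A_1 & A_2 & \cdots & A_{k-1} & A_k \\ I_m & 0 & \cdots & 0 & 0 \\ 0 & I_m & \cdots & 0 & 0 \\ \vdots & & \ddots & & \vdots \\ 0 & 0 & \cdots & I_m & 0 \end{pmatrix} \in \RR^{mk \times mk},
\]
whose characteristic polynomial is $\det A(z)$, so $A(z) \in \mathfrak{S}^m_k$ iff every eigenvalue of $F$ lies in $\mathcal{D}$. For the forward direction, if $A(z) \in \mathfrak{S}^m_k$ I would invoke the unique causal stationary solution $\{X_t\}$ of $X_t = \sum_{i=1}^k A_i X_{t-i} + Z_t$ driven by any $\Sigma \in {\mathscr{S}}^m_{++}$; setting $U(h) := \Gamma(h)$ makes $\underline{U}_k = \underline{\Gamma}_k \loe 0$ (it is the covariance of $(X_0^{\prime},\ldots,X_k^{\prime})^{\prime}$, nondegenerate because $\Sigma \loe 0$). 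The Yule--Walker identities $\Gamma(\ell) = \sum_{i=1}^k A_i \Gamma(\ell-i)$ at $\ell=1,\ldots,k$ assemble block-wise into $(A_1,\ldots,A_k)\, \underline{\Gamma}_{k-1} = (\Gamma(1),\ldots,\Gamma(k))$, i.e.\ $A = \xi_k^{\prime}\, \underline{U}_{k-1}^{-1}$.

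For the converse, the key step is the Stein-type identity
\[
\underline{U}_{k-1} \;-\; F\, \underline{U}_{k-1}\, F^{\prime} \;=\; \begin{pmatrix} C_k & 0 \\ 0 & 0 \end{pmatrix},
\]
where $C_k \loe 0$ by hypothesis $\underline{U}_k \loe 0$. I would verify this by direct block multiplication. The first block row of $F \underline{U}_{k-1}$ equals $A\, \underline{U}_{k-1} = \xi_k^{\prime} = (U(1),\ldots,U(k))$ by hypothesis, while its remaining block rows are precisely the first $k-1$ block rows of $\underline{U}_{k-1}$. Post-multiplication by $F^{\prime}$ then acts as a block right-shift on columns $2,\ldots,k$; in column $1$, the transposed Yule--Walker identity $\underline{U}_{k-1}\, A^{\prime} = \xi_k$ (using symmetry of $\underline{U}_{k-1}$) collapses the $(1,1)$ entry of $F \underline{U}_{k-1} F^{\prime}$ to $\xi_k^{\prime} \underline{U}_{k-1}^{-1} \xi_k = U(0) - C_k$, while the lower column-$1$ blocks simplify to $U(1)^{\prime},\ldots,U(k-1)^{\prime}$, which coincide with the below-diagonal first-column entries of $\underline{U}_{k-1}$ and therefore cancel when one subtracts.

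Because the residual is only positive semi-definite, Proposition~\ref{prop1} does not apply verbatim, and I would complete the argument by contradiction on eigenvalues. Suppose $v\neq 0$ is a left eigenvector of $F$ with eigenvalue $\lambda$ and $|\lambda|\geq 1$; write $v$ in blocks $v_1,\ldots,v_k \in \CC^m$. Since $F$ is real, $v^{*} F = \lambda v^{*}$ implies $F^{\prime} v = \bar\lambda v$, so sandwiching the Stein identity between $v^{*}$ and $v$ yields
\[
(1-|\lambda|^2)\, v^{*} \underline{U}_{k-1} v \;=\; v_1^{*}\, C_k\, v_1.
\]
The left-hand side is $\leq 0$ (as $\underline{U}_{k-1} \loe 0$ and $|\lambda|^2 \geq 1$), while the right-hand side is $\geq 0$ (as $C_k \loe 0$); both must vanish, forcing $v_1 = 0$. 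Reading $v^{*} F = \lambda v^{*}$ blockwise gives $v_1^{*} A_j + v_{j+1}^{*} = \lambda v_j^{*}$ for $j<k$ and $v_1^{*} A_k = \lambda v_k^{*}$; with $v_1 = 0$ and $\lambda\neq 0$, these cascade to $v_2 = v_3 = \cdots = v_k = 0$, contradicting $v \neq 0$. Hence every eigenvalue of $F$ lies in $\mathcal{D}$, i.e.\ $A(z)\in\mathfrak{S}^m_k$.

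The main obstacle is the bookkeeping in the block identity $\underline{U}_{k-1} - F \underline{U}_{k-1} F^{\prime} = \mathrm{diag}(C_k,0)$---in particular, tracking the below-diagonal transposes in the block-Toeplitz matrix $\underline{U}_{k-1}$ and locating each use of the Yule--Walker hypothesis in both its direct and transposed forms. Once that identity is secured, the remainder is a short companion-matrix argument that effectively sharpens Proposition~\ref{prop1} from a strict to a semi-definite residual, using that the companion structure together with $C_k \loe 0$ guarantees the residual ``sees'' every left eigenvector of $F$.
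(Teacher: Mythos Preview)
Your proposal is correct and follows essentially the same route as the paper: the forward direction builds $\underline{U}_k$ from the autocovariances of the causal VAR$(k)$ with coefficients $A$, and the converse establishes the Stein identity $\underline{U}_{k-1} - F\,\underline{U}_{k-1}\,F' = \mathrm{diag}(C_k,0)$ for the companion matrix $F$ (the paper's $\tilde{A}$) and then uses the left-eigenvector argument to force $|\lambda|<1$. The only cosmetic difference is that you argue by contradiction (assume $|\lambda|\ge 1$, deduce $v_1=0$, cascade to $v=0$), whereas the paper first shows $w_1\neq 0$ directly from the companion recursion and then reads off $|\lambda|<1$; note that your cascade $v_{j+1}^* = \lambda v_j^*$ from $v_1=0$ does not actually require $\lambda\neq 0$.
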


What Theorem~\ref{thm:stabCond} gives us is a  way of parameterizing Schur-Stable polynomials via positive definite block Toeplitz matrices. However, positive definite block Toeplitz matrices are not necessarily easy to deal with. Representations of block Toeplitz matrices can be obtained in manner similar to those in \cite{constantinescu, delsargte_etal} but these parameterizations are not convenient for optimization or prior specification. Thus, we need further parameterization of $\mathfrak{T}^{m,k}_{++}.$ The following theorem provides a characterization of elements of $\mathfrak{T}^{m,k}_{++}$ that will allow us to parameterize $\mathfrak{T}^{m,k}_{++}$, and hence $\mathfrak{S}^m_k$ in terms of simpler objects that lend themselves to optimization and prior specification more easily.

\begin{thm}
  \label{thm:schur_iter}
A block Toeplitz matrix ${\underline{U}}_k$ of the form \eqref{blocktoeplitz}  is positive definite iff the associated Schur complement sequence defined in \eqref{lower_schur}  satisfies $C_0 \loe  C_1 \loe \cdots \loe C_k \loe 0.$\\
\end{thm}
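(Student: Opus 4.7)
The plan is strong induction on the order $k$ of $\underline{U}_k$, using the nested representation \eqref{lower_ut}. The base case $k=0$ is immediate: $\underline{U}_0 = U(0) = C_0$, so $\underline{U}_0 \loe 0$ iff $C_0 \loe 0$ and the chain condition is vacuous.

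For the inductive step I would apply the classical Schur-complement criterion to the decomposition in \eqref{lower_ut}: the matrix $\underline{U}_k$ is positive definite iff $\underline{U}_{k-1}$ is positive definite and its Schur complement $C_k = U(0) - \xi_k' \underline{U}_{k-1}^{-1} \xi_k$ is positive definite. By the induction hypothesis, positive definiteness of $\underline{U}_{k-1}$ is equivalent to $C_0 \loe C_1 \loe \cdots \loe C_{k-1} \loe 0$. Combining this with $C_k \loe 0$ immediately yields the forward implication at level $k$; the reverse implication follows once I verify the remaining link $C_{k-1} \loe C_k$.

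This monotonicity of the Schur-complement sequence is the key technical input and is precisely where the Toeplitz structure enters. My plan is probabilistic: when $\underline{U}_k \loe 0$, regard it as the covariance matrix of a mean-zero Gaussian block vector $(Y_0, \ldots, Y_k)$; the shift invariance of $U(0), U(1), \ldots, U(k)$ allows this vector to be embedded as the initial segment of a stationary Gaussian sequence. Under this identification, $C_t$ is the covariance of the linear least-squares prediction error of $Y_0$ given $Y_1, \ldots, Y_t$, or, equivalently by stationarity, of $Y_t$ given $Y_0, \ldots, Y_{t-1}$. The matrix form of the law of total variance---monotonicity of conditional covariance in the Loewner order under enlargement of the conditioning set---then delivers $C_k \loeeq C_{k-1}$, with strict inequality when the added conditioning variable contributes genuine information. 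An alternative, purely algebraic route would use both nested decompositions \eqref{lower_ut} and \eqref{upper_ut} together with the block-inversion formula for $\underline{U}_{k-1}^{-1}$ to rewrite $C_{k-1} - C_k$ as a Schur complement of a manifestly positive semidefinite submatrix.

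The main obstacle I anticipate is exactly this monotonicity step: for an arbitrary nested family of positive definite matrices, Schur complements need not be monotone, so the reflectional symmetry built into the Toeplitz form \eqref{blocktoeplitz}, captured either by stationary extension or by the simultaneous use of the lower and upper nested representations, must be exploited at this one point. The remainder of the argument is routine bookkeeping with the Schur-complement criterion and the inductive hypothesis.
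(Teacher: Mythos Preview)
Your inductive framework is correct and matches the paper's implicit structure: the converse direction in the paper is exactly your Schur-complement-plus-induction argument, stated in one line. For the forward direction and the key monotonicity step $C_{t-1} \loeeq C_t$, the paper takes the purely algebraic route you list as your alternative: it writes $\xi_t' = [\xi_{t-1}', U(t)']$, inverts $\underline{U}_{t-1}$ via the \emph{upper} representation \eqref{upper_ut} (bringing in $\underline{U}_{t-2}$, $\kappa_{t-1}$, and the complementary Schur complement $D_{t-1}$), and obtains the explicit identity
\[
C_{t-1} - C_t \;=\; \bigl( U(t)' - \kappa_{t-1}' \underline{U}_{t-2}^{-1} \xi_{t-1} \bigr)\, D_{t-1}^{-1}\, \bigl( U(t)' - \kappa_{t-1}' \underline{U}_{t-2}^{-1} \xi_{t-1} \bigr)'.
\]
Your probabilistic primary route is a genuinely different and cleaner path to the inequality: once $\underline{U}_k$ is read as the covariance of $(Y_0,\ldots,Y_k)$, the Toeplitz structure alone (you do not need the stationary extension) gives $\mathrm{Cov}(Y_0,\ldots,Y_{t-1}) = \mathrm{Cov}(Y_1,\ldots,Y_t) = \underline{U}_{t-1}$, so $C_{t-1}$ and $C_t$ are prediction-error covariances of the same block $Y_0$ from nested predictor sets, and Loewner monotonicity of conditional covariance finishes it. What the paper's explicit formula buys, however, is the recursion \eqref{eq:gammat} used immediately afterward to reconstruct $U(t)$ from the pre-parameters $(V_t,Q_t)$; your probabilistic argument proves the theorem but does not by itself yield that constructive identity. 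One caution: your hedge about strict versus non-strict inequality is well placed---the displayed difference is in general only positive semidefinite, and the paper's own assertion of strict Loewner ordering at every step is not fully supported by its formula either.
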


Let $C_k = M$ be fixed and given. 
 Let $V_t = C_{t-1} - C_{t}$ for $1 \leq t \leq k.$  Following  the proof of Theorem~\ref{thm:schur_iter}, we have
\[
 V_t =  { \left( U (t)' - \kappa_{t-1}^{\prime} {\underline{U}}_{t-2}^{-1} \xi_{t-1} \right) } D_{t-1}^{-1}   { \left( U(t)' - \kappa_{t-1}^{\prime}  {\underline{U}}_{t-2}^{-1} \, \xi_{t-1} \right)^{\prime}  },
\]
 which has the solution
\begin{equation}
 \label{eq:gammat}
U(t)' = \kappa_{t-1}^{\prime} {\underline{U}}_{t-2}^{-1}  \xi_{t-1}  + V_t^{1/2}  Q_t D_{t-1}^{1/2}
\end{equation}
 for some orthogonal matrix $Q_t.$ Here $V_t^{1/2}$ and $D_{t-1}^{1/2}$ are  symmetric square roots of $V_t$ and $D_{t-1},$ respectively. Then \eqref{eq:gammat}  defines the key recursion equation that allows us to solve $U(t),\, t=0,\ldots,k,$ iteratively once the positive definite matrices $M, V_1, \ldots, V_k$ and the orthogonal matrices $Q_1, \ldots, Q_k$ have been specified. At the $t$th stage, all quantities on the right side of   \eqref{eq:gammat}  are known and hence $U(t)$ as well as ${\underline{U}}_t$ can be computed. Subsequently $\xi_t, \kappa_t$ and $D_t$ can be obtained from ${\underline{U}}_t$ through \eqref{lower_schur} and  \eqref{upper_schur}  and then used in the $(t+1)$th iteration. 

\[ C_0 = (C_0 - C_1) + (C_1 - C_2) + \cdots + (C_{k-1} - C_k) + C_k = \sum_{t=1}^{k} V_t + M.\]
Thus,   to initialize the algorithm we can set  $U(0)  = C_0 =  \sum_{t=1}^{k}  V_t + M.$  The base case of (\ref{eq:gammat}) is
\[
U(1) = V_1^{1/2} \, Q_1 D_0^{1/2} =   V_1^{1/2} \, Q_1 U(0)^{1/2}.
\]
The full iteration starts with the $t=2$ case. \\

\noindent {\bf Algorithm for computing $A(z)$ from $V_1,\ldots, V_p, Q_1, \ldots, Q_p$}\\
The following steps describe the algorithm for computing the block Toeplitz matrix ${\underline{U}}_p$ and hence $A,$ the  coefficients of the Schur-Stable polynomial $A(z)$  from the pre-parameters $ V_1, \ldots, V_p, Q_1, \ldots, Q_p:$ 
\begin{enumerate}
\item
Set $ U(0) = C_0 =  M + \sum_{j=1}^p V_j. $
\item
Compute  $U(1)'  = V_1^{1/2} \, Q_1 U(0)^{1/2}$ and obtain ${\underline{U}}_1.$
\item
Compute $\kappa_1, \xi_1, D_1$ based on ${\underline{U}}_1.$ Here $\kappa_1 = \xi_1^{\prime} = U(1)$ and 
$D_1 = U(0) - U(1)'U(0)^{-1}U(1).$
\item
Compute $U(2) = \kappa_1^{\prime}U(0)^{-1}\xi_1 + V_2^{1/2}Q_2 D_1^{1/2}$ and obtain ${\underline{U}}_2$ from $U(0),$ $ U(1), U(2).$
\item 
Obtain $\kappa_2, \xi_2, D_2$ and iterate using \eqref{eq:gammat}.
\item
Once $U(0), U(1), \ldots, U(p)$ and hence ${\underline{U}}_p$  have been obtained compute $A$ using the  Yule-Walker relation $A = \xi_p'{\underline{U}}_{p-1}^{-1}.$
\end{enumerate}
The fact that the parameterization $A(z) \leftrightarrow (V_1,\ldots, V_p, Q_1, \ldots, Q_p)$ is a bijective map from the Schur-Stable space to the space of positive definite matrices and orthogonal matrices can be established in a manner similar to that of Proposition~\ref{prop2}. The algebra is straight-forward but tedious and is omitted. 

\subsection{Generalized Stein transformation}
To draw analogy with the VAR$(1)$ case, we define a generalization of  Stein transformation  on the set of positive definite block Toeplitz matrices. Fix a set of coefficient matrices $A = \left[ A_1,\ldots, A_k\right] \in \mathbb{R}^{m\times mk}$  associated with a polynomial $A(z) = z^k - A_1 z^{k-1} - \cdots - A_k,$ and a symmetric  matrix ${\underline{U}} \in {\mathscr{S}}^{mk}$ with $U_{11}$ as the upper left $m\times m$ block of ${\underline{U}}$.  Let
\be
 \tilde{A} = \begin{pmatrix} A_1 & A_2 & \cdots & A_{k-1}& A_k\\ 1_m & 0 & \cdots & 0 & 0\\
\vdots & \cdots &  \ddots & 0 & \vdots\\
0 & 0 & \cdots & 1_m & 0
\end{pmatrix}.
\lb{atilde}
\ee
 Define the {\it Generalized Stein Transformation} ${\tilde{S}}_k(A, {\underline{U}}) : \mathbb{R}^{m\times mk} \times {\mathscr{S}}^{mk} \rightarrow {\mathscr{S}}^{mk}$ by 
\be
 {\tilde{S}}_k(A,{\underline{U}}) = U - {\tilde{A}} {\underline{U}}{\tilde{ A}}',
\lb{general_stein}
\ee
i.e., ${\tilde{S}}_k(A,{\underline{U}}) = S({\tilde{A}},{\underline{U}}).$ Also define $S_k(A, {\underline{U}}) = U_{11} - A {\underline{U}} A'$ to be the upper left $m\times m$ block of ${\tilde{S}}_k(A, {\underline{U}})$.  
 The Generalized Stein Transformation reduces to the Stein transformation for the case $k  =1.$ 
 Analogous to \cite{stein1952}, one can characterize Schur-Stability of $A(z)$ from properties of the transformation. 
\begin{thm}
\lb{thm:steinStab}
A matrix polynomial $A(z) = z^k - A_1 z^{k-1} - \cdots - A_k$ with coefficients  $A = \left[ A_1, \ldots, A_k\right]$ is Schur-Stable iff there exists a positive definite block Toeplitz matrix ${\underline{U}}_{k-1} \in {\mathfrak{T}}^{m,k-1}_{++},$ such that the Generalized Stein Transform ${\tilde{S}}_k(A, {\underline{U}}_{k-1}) \in {\mathscr{S}}^{mk}_{+}$  with  ${{S}}_k(A, {\underline{U}}_{k-1}) \in {\mathscr{S}}^{m}_{++}. $
\end{thm}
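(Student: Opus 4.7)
The plan is to reduce the result to Theorem~\ref{thm:stabCond} and Theorem~\ref{thm:schur_iter} via a direct block computation of $\tilde{S}_k(A,\underline{U}_{k-1})=\underline{U}_{k-1}-\tilde{A}\,\underline{U}_{k-1}\,\tilde{A}'$ that exploits the shift structure of $\tilde{A}$ together with the Toeplitz structure of $\underline{U}_{k-1}$. First I would partition $\tilde{A}=\begin{pmatrix}A\\ J\end{pmatrix}$, where $A=[A_1,\ldots,A_k]$ is the top $m\times mk$ block-row and $J=[1_{m(k-1)},\,0]$ is the block shift formed by the lower $m(k-1)$ rows of \eqref{atilde}. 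Writing $\underline{U}_{k-1}$ simultaneously in its lower representation $\begin{pmatrix}U(0)&\xi_{k-1}'\\\xi_{k-1}&\underline{U}_{k-2}\end{pmatrix}$ and its upper representation, the key observation is that $J\,\underline{U}_{k-1}\,J'$ is the top-left $m(k-1)\times m(k-1)$ principal block of $\underline{U}_{k-1}$, which by the upper representation equals $\underline{U}_{k-2}$ --- the same matrix that appears as the bottom-right block of $\underline{U}_{k-1}$ in the lower representation. This Toeplitz-induced coincidence forces the $(2,2)$ block of $\tilde{S}_k(A,\underline{U}_{k-1})$ to vanish identically in $A$, reduces the $(1,2)$ block to $\xi_{k-1}'-(A\,\underline{U}_{k-1})J'$, and leaves the $(1,1)$ block as $S_k(A,\underline{U}_{k-1})=U(0)-A\,\underline{U}_{k-1}\,A'$.

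For the forward direction I would invoke Theorem~\ref{thm:stabCond} on the Schur-Stable polynomial $A(z)$ to obtain $\underline{U}_k\in\mathfrak{T}^{m,k}_{++}$ with $A=\xi_k'\,\underline{U}_{k-1}^{-1}$, where $\underline{U}_{k-1}$ is the leading principal submatrix of $\underline{U}_k$. Then $A\,\underline{U}_{k-1}=\xi_k'=(U(1),\ldots,U(k))$, so $\xi_k'J'=\xi_{k-1}'$ annihilates the $(1,2)$ block, and the $(1,1)$ block becomes the Schur complement $C_k=U(0)-\xi_k'\,\underline{U}_{k-1}^{-1}\,\xi_k$, which is positive definite by Theorem~\ref{thm:schur_iter}. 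Conversely, given $\underline{U}_{k-1}\in\mathfrak{T}^{m,k-1}_{++}$ such that $\tilde{S}_k(A,\underline{U}_{k-1})$ is positive semi-definite and $S_k(A,\underline{U}_{k-1})$ is positive definite, the zero $(2,2)$ block forces the $(1,2)$ block to vanish as well (since a symmetric positive semi-definite matrix with a zero diagonal block must have the corresponding off-diagonal block equal to zero), so that the first $m(k-1)$ columns of $A\,\underline{U}_{k-1}$ coincide with $\xi_{k-1}'$. Setting $U(k)$ to be the final $m$-column block of $A\,\underline{U}_{k-1}$ produces $A=\xi_k'\,\underline{U}_{k-1}^{-1}$, and the block Toeplitz extension $\underline{U}_k$ built from $U(0),\ldots,U(k)$ has lower Schur complement $C_k=S_k(A,\underline{U}_{k-1})$ positive definite by hypothesis; together with $\underline{U}_{k-1}\in\mathfrak{T}^{m,k-1}_{++}$, Theorem~\ref{thm:schur_iter} promotes $\underline{U}_k$ to $\mathfrak{T}^{m,k}_{++}$. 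A final application of Theorem~\ref{thm:stabCond} then yields $A(z)\in\mathfrak{S}^m_k$.

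The only real obstacle --- more bookkeeping than substance --- is verifying the vanishing of the $(2,2)$ block of $\tilde{S}_k$, which hinges on simultaneously invoking the upper and lower representations of $\underline{U}_{k-1}$ and recognizing that the shift rows of $\tilde{A}$ act on $\underline{U}_{k-1}$ by selecting its leading principal submatrix. Once that identity is in hand, both directions of the equivalence reduce to short applications of Theorems~\ref{thm:stabCond} and~\ref{thm:schur_iter}, and the $k=1$ case collapses, as it should, to Stein's classical characterization of matrix Schur-stability.
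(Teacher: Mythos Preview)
Your argument is correct, and the forward direction essentially coincides with the paper's: both invoke Theorem~\ref{thm:stabCond} (equivalently, the causal VAR construction) to produce $\underline{U}_{k-1}$, and both make the same structural observation that the Toeplitz symmetry forces the $(2,2)$ block of $\tilde{S}_k(A,\underline{U}_{k-1})$ to vanish identically, so that positive semi-definiteness annihilates the off-diagonal block as well. Your converse, however, takes a genuinely different route. The paper does not reconstruct $\underline{U}_k$; instead it repeats Stein's eigenvector argument directly on the companion matrix $\tilde{A}$, exactly as in the proof of Theorem~\ref{thm:stabCond}: take a left eigenvector $w^*$ of $\tilde{A}$, use the companion structure to show its first block $w_1\neq 0$, and sandwich the identity $\underline{U}_{k-1}-\tilde{A}\,\underline{U}_{k-1}\,\tilde{A}'=\left(\begin{smallmatrix}S_k(A,\underline{U}_{k-1})&0\\0&0\end{smallmatrix}\right)$ to get $(1-|\lambda|^2)\,w^*\underline{U}_{k-1}w=w_1^*\,S_k(A,\underline{U}_{k-1})\,w_1>0$. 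Your reduction to Theorems~\ref{thm:stabCond} and~\ref{thm:schur_iter} is cleaner modularly --- it avoids repeating the eigenvector argument and makes explicit that the Generalized Stein condition recovers the Yule--Walker relation $A=\xi_k'\,\underline{U}_{k-1}^{-1}$ --- whereas the paper's approach is shorter and self-contained. One minor remark: your appeal to Theorem~\ref{thm:schur_iter} to ``promote $\underline{U}_k$'' is more than you need; the elementary Schur-complement fact that $\underline{U}_{k-1}\loe 0$ together with $C_k\loe 0$ already gives $\underline{U}_k\loe 0$.
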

\begin{remark}
In general, for self-dual cones in finite dimensional Hilbert space with a Euclidean Jordan algebra, characterization of Stein-type operators can be done \cite{schneider}. However, due to the special structure of $\tilde{A}$ a more refined result on positivity of the generalized transformation can be obtained.  
\end{remark}

\begin{remark}
The quantity  $S_k(A, {\underline{U}}_{k-1})$  is precisely the Schur-complement $C_k$ under the conditions of  Theorem~\ref{thm:stabCond}. Also, from the proof of Theorem~\ref{thm:steinStab}  it is clear that if ${\underline{U}}_{k-1}$ satisfying the conditions of Theorem~\ref{thm:steinStab} exists, then necessarily ${\tilde{S}}_k(A, {\underline{U}})$ will be of the form
\[ {\tilde{S}}_k(A, {\underline{U}}) = \begin{pmatrix} {S}_k(A, {\underline{U}}) & 0 \\ 0 & 0 \end{pmatrix}.\]
\end{remark}

As in the VAR$(1)$ case, for any fixed $M \in {\mathscr{S}}^{m}_{++},$ consider the pre-image
\[ A_M({\underline{U}}_{k-1}) = \{ A: S_k(A, {\underline{U}}_{k-1}) = M \}. \]
The set is non-empty since for any $M$  by Theorem~\ref{thm:schur_iter} one could construct positive definite block Toeplitz matrix ${\underline{U}}_k$ with $C_k = M$  and  then  by  Theorem~\ref{thm:stabCond} any $A$ of the form $A = \xi_k'{\underline{U}}_{k-1}^{-1}$ will be a member of the pre-image. Also, as before it seems natural to expect that 
\[ \mathfrak{S}^m_k = \bigcup \{ A_M({\underline{U}}_{k-1}): {{\underline{U}}_{k-1} \in \mathfrak{T}^{m,k}_{++}}\}. \]
The result is established by noting that given $A \in \mathfrak{S}^m_k,$ and $M \in {\mathscr{S}}^{m}_{++},$ one can construct a causal VAR$(k)$  model with $A$ as the coefficients and $M$ as the innovation variance and then ${\underline{U}}_{k-1} = \Gamma_{k-1}$ will satisfy the Generalized Stein Transformation. This  reinforces the fact that the class of Schur-Stable polynomials can be parameterized by the class of positive definite block Toeplitz matrices which in turn is done with the help of Theorem~\ref{thm:schur_iter}.

\subsection{Further reparametrization}

From the general algorithm for constructing coefficients of Schur-Stable polynomials in terms of positive definite matrices and orthogonal matrices, we can define a  pre-parameterization of the autoregressive and the moving average polynomials of a causal invertible VARMA$(p, q)$ process as $(V_1^{AR}, Q_1^{AR}, \ldots, V_p^{AR}, Q_p^{AR})$ and $(V_1^{MA}, Q_1^{MA}, \ldots, $ $V_p^{MA}, Q_p^{MA}),$ respectively. This, in addition to the innovation variance $\Sigma$ provides the full pre-parameterization of the  process. To obtain the pre-parameterization one has to fix the $M$ matrix given in the algorithm.  Without loss of generality,the matrix can be chosen to be $M = 1_m$ and from our investigation it is clear that the estimators are not sensitive to the choice of $M$.  We note in passing that our parameterization provides the autocovariance function sequence of the VARMA$(p, q)$ directly, which is useful in applications such as forecasting and prediction.

The main objective of the parameterization is facilitating likelihood based estimation, e.g. maximum likelihood estimation or Bayesian estimation. To that end, the positive definiteness constraints and the orthogonality constraints still pose significant challenges.  Thus, it is convenient to further parametrize the basic pre-parameters $\Sigma$, $V$'s and $Q$'s in terms of essentially unrestricted real numbers.

First we consider the positive definite matrices.  
While the positive definite matrices $\Sigma, V_1, \ldots, V_p$ may be used directly for prior specification, they need not be the most suitable quantities for likelihood optimization. For further simplification one could use the forms described in \cite{lindstrom_bates, leonard_hsu} or  other forms of decomposition given in terms of eigenvalues and eigenvectors can  be pursued.  
The  standard parameterization of positive definite matrices in terms of their Cholesky decomposition given in \cite{lindstrom_bates}  is particularly useful.  For any positive definite matrix the Cholesky form decomposes the matrix in terms of a lower triangular matrix with ones along the diagonal and a diagonal matrix with positive entries along the diagonal. Thus for an $m\times m$ positive definite matrix, the representation provides $\binom{m}{2}$ real numbers as the entries of the lower triangular matrix and $m$ positive real numbers as the diagonal entries, thereby keeping the total number of free parameters equal to that of the positive definite matrix, $\binom{m+1}{2},$ but having the advantage that the parameters of the decomposition are essentially unrestricted. The specific form for an $m\times m$ positive definite matrix $V$ is $V = L D L^{\prime}$ where $L$  and $D$ are given by
\[ L = \begin{pmatrix} 1 & 0  & \cdots & 0\\
l_{2,1} & 1 & \cdots & 0\\
\vdots & \vdots & \ddots & 0 \\
l_{m,1} & l_{m,2} & \cdots &1\end{pmatrix}, \;\;\;\; D =  \begin{pmatrix} e^{d_1} & 0 & \cdots & 0\\
0 & e^{d_2} & \cdots & 0 \\
\vdots & \cdots & \ddots & 0\\
0 & 0 & \cdots & e^{d_m} \end{pmatrix}. \]
This will henceforth be called the LDL decomposition.
The quantities $(l_{2,1}, $ $ \ldots, l_{m,m-1})$ and $( d_1, \ldots, d_m)$ are $\binom{m+1}{2}$ real numbers that parameterize the entries of $V$.

The orthogonal matrices $Q$s are neither suitable for optimization nor for prior specification. Any element in $O(m)$ can be connected to one in $SO(m)$ through a single householder reflection. Let $E_{\delta} = 1_m - 2\delta e_1e_1^{\prime},$  where $\delta \in \{0,1\},$ and $e_1 = (1, 0, \ldots, 0)'.$  The matrices $E_0, E_1$ denote two Householder reflections with $E_0 = 1_m$ as the identity reflection. Then any element $Q \in O(m)$ can be viewed as 
\[  Q = E_{\delta} R, \]
 for some $\delta \in \{0,1\}$ and some $R \in SO(m).$  There are several possible parameterizations  available for $SO(m),$ and along with $\delta$ that provides full parameterization of $O(m).$
To bring in more flexibility in numerical computation involving $Q$, it seems prudent to further decompose $R$ in terms of simpler quantities.

One obvious parameterization of $SO(m)$ is in terms of Given's rotations (angles). However, for higher values of $m$ the decomposition seems cumbersome.We use the Cayley representation of $SO(m)$ in terms of skew-symmetric matrices along with Householder  reflections to generate a parameterization of the entire $O(m)$. The Cayley representation \cite{cayley} says that for any matrix $Q \in SO(m)$ there exists a skew-symmetric matrix $S$ such that 
$Q = (1_m - S)(1_m + S)^{-1}.$

 Then every orthogonal matrix $Q$ in $O(m)$ can be  written as 
\be
 Q(\delta,S) = E_{\delta}(1_m - S)(1_m + S)^{-1},
\lb{cayley}
\ee
 for some skew-symmetric matrix $S$ and some reflection $E_{\delta}.$  The key advantage is that the distinct entries of $S$ are unconstrained, i.e., can be any real numbers; this arrangement does not naturally arise from the Givens formulation. Here we could let the lower triangular elements of $S$ range freely, setting the diagonal to zero and the upper triangular portion equal to the negative transpose of the lower trainagular part. Thus $\binom{m}{2}$ real numbers, as well as a discrete choice of $\delta,$ determines each $Q$.

\section{Numerical computation}
Even without the constraints of causality and invertibility, numerical computation and estimation for autoregressive moving average models with nonzero moving average components have traditionally been quite challenging. This difficulty in computation is due to the non-linearity of the model involving the moving average component. The parameterization described in the previous section provides a feasible way for carrying out likelihood based estimation for parameters of a causal invertible VARMA$(p, q)$ process. Before proceeding we describe our convention for denoting the pre-parameters associated with the autoregressive part and the moving average parts, respectively. 

The  matrix valued pre-parameters associated with the autoregressive part will be denoted as $(V_1^1, \ldots, V_p^1)$ and $(Q_1^1, \ldots, Q_p^1)$ whereas those associated with the moving average part will be denoted as $(V_1^2, \ldots, V_q^2)$ and $(Q_1^2, \ldots, Q_q^2),$ respectively. The innovation variance will be denoted by $\Sigma$. The matrix $M$  needed to start the parameterization of the Schur-stable polynomials will be taken as the identity matrix $1_m$ for both autoregressive and the moving average polynomials. The scalar valued parameters associated with  LDL decomposition of the $V$ matrices for the autoregressive part will be denoted as ${\underline{l}}^{1,j} = (l_{2,1}^{1,j}, \ldots,l_{m,m-1}^{1,j})$ and ${\underline{d}}^{1,j} = (d_1^{1,j}, \ldots, d_m^{1,j})$ for $\;j=1,\ldots,p,$ and as  ${\underline{l}}^{2,j} = (l_{2,1}^{2,j}, \ldots,l_{m,m-1}^{2,j})$ and ${\underline{d}}^{2,j} = (d_1^{2,j}, \ldots, d_m^{2,j}),$ $\;j=1,\ldots,q$ for the moving average part.  Similarly the scalar pre-parameters associated with the orthogonal matrices will be denoted as 
${\underline{s}}^{1,j} = (s_{2,1}^{1,j}, \ldots, $ $s_{m,m-1}^{1,j})$ and ${\underline{\delta}}^{1,j},$  $\;j=1,\ldots,p,$ for the autoregressive part  and as  ${\underline{s}}^{2,j} = (s_{2,1}^{2,j}, $ $\ldots,s_{m,m-1}^{2,j})$ and ${\underline{\delta}}^{2,j}$ for $\;j=1,\ldots,q$ for the moving average part. The LDL parameters associated with the innovation variance are written as ${\underline{l}}^{0} = (l_{2,1}^{0}, \ldots,l_{m,m-1}^{0})$ and ${\underline{d}}^{0} = (d_1^{0}, \ldots, d_m^{0}).$ All together the real pre-parameters are then $(\underline{l}, \underline{d}, \underline{s})$ where $\underline{l} = ({\underline{l}}^{1,1}, \ldots, {\underline{l}}^{1,p},{\underline{l}}^{2,1},\ldots, {\underline{l}}^{2,q}, {\underline{l}}^0)$ and $\underline{d}, \underline{s}$ are similarly defined.

\subsection{Maximum likelihood}
The standard optimization methods are either gradient based stepping algorithms or direct search algorithms. Gradient based methods for the VARMA model may have difficulty due to the  non-linear nature of the likelihood. The direct search methods may fail due to the high number of parameters. As discussed in the introduction, there has been extensive work on VARMA estimation using maximum likelihood. Several procedures have been proposed in the literature  none of which maintain the constraints of causality and invertibility.  For maximum likelihood computation it will be convenient to use the LDL form for the positive definite part of the parameterization. Thus all parameters except the $\delta$'s  e.g. ${\underline{l}}, {\underline{d}}$ and ${\underline{s}},$ are  unrestricted real numbers. For a VARMA$(p, q)$ the $\delta$ parameters ${\underline{\delta}}  = ({\underline{\delta}}^{1,1},  \ldots, {\underline{\delta}}^{1,p},{\underline{\delta}}^{2,1}, \ldots, {\underline{\delta}}^{2,q})$ lie in $\{0, 1\}^{p+q}$ providing $2^{p+q}$ possible values for the parameters. Suppose the VARMA likelihood is written as $\mcL(\Phi, \Theta, \Sigma)$ where it is understood that the parameters are functions of the pre-parameters ${\underline{l}}, {\underline{d}}, {\underline{s}}$ and ${\underline{\delta}}.$ Then the maximum likelihood estimators are defined as 
\[ ({\hat{\Phi}}, {\hat{\Theta}}, {\hat{\Sigma}}) = \underset {\delta \in \{0, 1\}^{p+q}}{\arg \max} \;\underset {{\underline{l}}, {\underline{d}}, {\underline{s}}}{\arg \max}\; \mcL(\Phi, \Theta, \Sigma). \]
Since $2^{p+q}$ will tend to be moderate in practice, a profile approach is recommended, i.e., compute maximizers for each value of $\delta$ and then take the best option.

To initialize the optimization algorithm one could choose arbitrary values of the pre-parameters.  In optimization with a  high number of parameters, the choice of initial values may be important. For a VAR$(p)$ one could initialize at the pre-parameter values associated with the Yule-Walker solution. For a general VARMA model there are no moment based or ad-hoc crude estimators of the parameters that restrict estimation to the causal invertible space. One possibility is to use a truncated Wold representation of the white noise series to write a VAR representation
\[  X_t =  \sum_{j=1}^{r} \Pi_j X_{t-j} + Z_t, \] 
for a moderately large value of $r,$  estimate $\Pi_1, \ldots, \Pi_r, \Sigma$ using a Yule-Walker solution and compute the residuals. Using the residuals from the long VAR representation, one could initialize the VARMA to be able to estimate the parameters using least squares. Such an approach is detailed in (\cite{lutkepohl}, pp-474). Since there is no guarantee that the estimators will be stable, one needs to convert the regression estimator to a stable estimator before it can be used as an initial estimator in the maximum likelihood procedure. For VARMA(1,1),in the case when either the VAR parameters or the VMA parameter has root outside the unit circle,  one could simply shrink the estimator to have maximum eigenvalue less than one, equal to some prespecified value, e.g. 0.999. For a more general VARMA it is not clear how to shrink the eigenvalues to have roots within the unit circle.  

\subsection{Bayesian Prior Specification and Computation}

While the parameterization helps in frequentist estimation of the VARMA model by making the likelihood maximization an essentially unconstrained optimization problem, it also facilitates Bayesian estimation of the models by allowing flexible specification of priors that are fully supported on the causal invertible space. 

For the VAR model, Normal-Inverse Wishart (NIW) distributions are popular choices for  priors on the coefficient matrices and the innovation variance. The Minnesota prior (Litterman 1980) also follows normal  specifications for coefficients of  individual equations in the VAR model.  Prudent choice of the hyperparameters in these specifications can lead to better forecasting properties for the BVAR. However, none of the current prior choices restrict the prior, and thereby the posterior, to the causal invertible space. More specifically, as described in Section~3 , the posterior probability of estimating a model with unit root or roots outside the unit circle remain quite significant when the sample size is small. In the case of a general VARMA model, the problem of prior specification is more challenging.  No obvious choice of prior exists and the  performance of the NIW priors remains unclear.

The parrameterization given in this article  restricts the prior to the causal invertible space for a general VARMA model. For prior specification one could directly use the pre-parameterization based on matrices or use those based on scalar pre-parameters. For the positive definite part of the  matrix valued pre-parameterization there are several options with obvious  prior choices being  Wishart or Inverse-Wishart. For the orthogonal matrices belonging to $SO(m)$ the choices are more limited.   
For prior specification on $SO(m)$ one could specify the uniform prior which will be  proper due to compactness of $SO(m)$.  Other direct prior specification on $SO(m)$ include  the Bingham-von Mises-Fisher (BMF) distribution \cite{hoff} or those involving the Langevin density on $SO(m)$ \cite{chiuso_etal}. Other methods involving representation of $SO(m)$ can also be used \cite{chikuse}. In addition one needs to specify a prior on $\delta$.
By using a reparameterization as $\delta = I(z < 0)$ one could specify a Gaussian prior on the latent quantity $z$. 
For the scalar representation, one could specify independent Gaussian priors on all the pre-parameters.

Bayesian computation under the described prior specification can be carried out easily using a Metropolis-hastings within Gibbs algorithms. There are no obvious simplification that can be incorporated in the standard implementation of a Markov chain Monte Carlo (MCMC)  routine. The initialization of the chains can be done in a manner similar to that for the MLE computation. More description of the implementation along with an R code is available in the supplementary material.

\section{Simulation}

To evaluate the performance of estimators based on the proposed pre-parameterization we conducted a limited simulation. 
The models explored are vector autoregression models. In the VARMA setting there are no available causal invertible estimator that can be compared with the proposed estimator. We compare the performance of the proposed estimator with that of the Yule-Walker estimator in VAR(1)  and VAR(2) in two dimensions, $m=2$ and VAR$(1)$ in three dimensions, $m =3.$  We summarize the performance of the estimators using the Monte Carlo root mean squared error based on $N=500$ Monte Carlo replication of samples of size $n = 100.$

\subsection{Two-dimensional VAR(1)}
First we consider the simplest model in the VARMA$(p, q)$ class, namely a  first order two-dimensional vector autoregression:
\be
X_t = \begin{pmatrix}\Phi_{11} & 0\\  1 & .8\end{pmatrix} X_{t-1} + Z_t,
\lb{sim_model1}
\ee
where $Z_t \stackrel{iid}{\sim} N(0, 1_2).$  The parameter that we vary  is the upper diagonal entry $\Phi_{11}$ which is also one of the eigenvalues of $\Phi_1$ under this parameterization. The values of $\Phi_{11}$ are chosen from the set $ \{-.95, -.95, -.9, -.8, -.6, $  $-.4, -.2, 0, .2, .4, .6, .8, .9,  .95, .99\}.$  
Maximum likelihood estimation is done using the {\it optim} function in R. The initial values of the pre-parameters are chosen to be those associated with the Yule-Walker estimator. We use  box constraints with upper and lower bounds for the real parameters. The bounds are $\pm \mbox{1e}\plus 30$ for the $l$ and $s$ pre-parameters and $\pm \mbox{1e}\plus10$ for the $d$ parameters. We use a tighter bound  for the $d$ parameter to prevent the iterations from generating near singular values of the $V$ matrix. For Bayesian estimation, $N(0, 5)$  prior is specified for all the pre-parameters except $\delta$ which is assigned a $Bernoulli(0.5)$ prior. The Bayesian updates are done with metropolis random walk for the real parameters and  via independent sampling with a jump distribution of $Bernoulli (0.5)$ for $\delta.$  The metropolis chains are of  length 20,000 with a burn-in of 5000.

Tables~\ref{tab1}-\ref{tab3} report the values of $\sqrt{nMSE},$ denoted by  RMSE for the four entries of $\Phi$  where for any generic parameter $\eta$ and its estimator ${\hat{\eta}}$ obtained based on a sample of size $n,$  the $nMSE$ over $N$ Monte Carlo replications is defined as 
\[ nMSE({\hat{\eta}}) = nN^{-1}\sum_{j=1}^N ({\hat{\eta}}_j - \eta)^2. \]
The values reported under the 'Asymptotic' column are the square-root of $n$ times the asymptotic variance of the parameter.
We also looked at the overall RMSE as a function of $\Phi_{11}$ where the overall RMSE is defined as 
$nN^{-1}\sum_{j=1}^N \| {\hat{\Phi}} - \Phi\|^2,$ and $\| \cdot \|$ is the Frobenius norm of a matrix. 
The overall RMSE is plotted as a function of $\Phi_{11}$ in Figure~\ref{fig1}.

From the reported values we see that the likelihood based estimators are performing better than the Yule-Walker estimator, particularly when the largest root is close to unity. All three estimators are causal and have similar bias but the gain in efficiency for the MLE and the Bayes estimator is largely due to reduction in variance. The Yule-Walker estimator of $\Sigma$ is particularly unstable (not reported here) for processes with roots close to the causal boundary. In terms of the overall RMSE, the likelihood based estimators are nearly twice as efficient as the Yule-Walker estimator when the largest root of the VAR coefficient is close to one.

\begin{table*}
\centering
\caption{RMSE of estimators for $\Phi_{11}$ and $\Phi_{21}$ in model \eqref{sim_model1}}
\begin{tabular}{cccccccc}
\hline
&& \multicolumn{3}{c}{$\Phi_{11}$} & \multicolumn{3}{c}{$\Phi_{21}$}\\
\hline
$\Phi_{11}$ & Asymptotic & Yule-Walker   & Bayes & MLE &  Yule-Walker   & Bayes & MLE\\
\hline
  -0.99 &0.321 &0.459 &0.403 &0.361& 0.466 &0.365 &0.341\\
  -0.95 &0.420 &0.503 &0.468 &0.437& 0.540 &0.413 &0.410\\
  -0.90 &0.513 &0.555 &0.555 &0.511& 0.588 &0.553 &0.537\\
  -0.80 &0.649 &0.616 &0.623 &0.603& 0.669 &0.609 &0.613\\
  -0.60 &0.824 &0.807 &0.839 &0.794& 0.885 &0.844 &0.849\\
  -0.40 &0.927 &0.867 &0.894 &0.849& 0.946 &0.923 &0.910\\
  -0.20 &0.982 &0.922 &0.951 &0.926& 1.022 &1.003 &0.996\\
   0.00 &1.000 &0.980 &1.030 &0.966& 0.985 &0.962 &0.954\\
   0.20 &0.984 &1.007 &0.994 &0.951& 1.000 &0.953 &0.952\\
   0.40 &0.937 &0.952 &0.966 &0.924& 0.941 &0.904 &0.896\\
   0.60 &0.861 &0.868 &0.847 &0.815& 0.906 &0.849 &0.852\\
   0.80 &0.750 &0.855 &0.778 &0.758& 0.860 &0.779 &0.773\\
   0.90 &0.678 &0.819 &0.731 &0.684& 0.959 &0.714 &0.714\\
   0.95 &0.636 &0.807 &0.687 &0.619& 1.098 &0.585 &0.586\\
   0.99 &0.598 &0.843 &0.644 &0.649& 1.439 &0.677 &0.681\\
\hline
\end{tabular}
\lb{tab1}
\end{table*}

\begin{table*}
\centering
\caption{RMSE of estimators for $\Phi_{12}$ and $\Phi_{22}$  in model \eqref{sim_model1}}
\begin{tabular}{c c c   c  cccc}
\hline
&& \multicolumn{3}{c}{$\Phi_{12}$} & \multicolumn{3}{c}{$\Phi_{22}$}\\
\hline
$\Phi_{11}$ & Asymptotic & Yule-Walker   & Bayes & MLE  & Yule-Walker   & Bayes & MLE\\
\hline
  -0.99 &0.523 &0.663 &0.562 &0.558 &0.719 &0.573 &0.532\\
  -0.95 &0.521 &0.601 &0.570 &0.555 &0.722 &0.603 &0.567\\
  -0.90 &0.518 &0.569 &0.536 &0.524 &0.682 &0.597 &0.545\\
  -0.80 &0.512 &0.574 &0.539 &0.525 &0.667 &0.610 &0.562\\
  -0.60 &0.497 &0.564 &0.517 &0.516 &0.606 &0.551 &0.504\\
  -0.40 &0.478 &0.533 &0.525 &0.521 &0.603 &0.539 &0.509\\
  -0.20 &0.454 &0.499 &0.466 &0.444 &0.560 &0.491 &0.475\\
   0.00 &0.424 &0.482 &0.477 &0.419 &0.534 &0.466 &0.454\\
   0.20 &0.385 &0.453 &0.435 &0.424 &0.470 &0.400 &0.386\\
   0.40 &0.337 &0.435 &0.414 &0.371 &0.440 &0.388 &0.370\\
   0.60 &0.276 &0.377 &0.317 &0.316 &0.396 &0.301 &0.299\\
   0.80 &0.203 &0.283 &0.257 &0.254 &0.324 &0.223 &0.221\\
   0.90 &0.161 &0.234 &0.202 &0.199 &0.351 &0.199 &0.201\\
   0.95 &0.140 &0.204 &0.174 &0.173 &0.376 &0.159 &0.158\\
   0.99 &0.122 &0.205 &0.170 &0.180 &0.456 &0.159 &0.160\\
\hline
\end{tabular}
\lb{tab3}
\end{table*}

\begin{figure}
\centering
\includegraphics[width=3in]{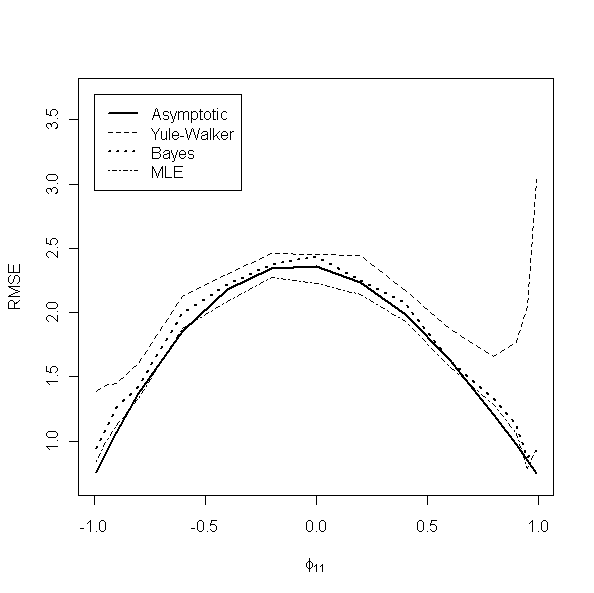}
\caption{Overall RMSE, $nN^{-1}\sum_{j=1}^N \| {\hat{\Phi}} - \Phi\|^2,$  for  different estimators of $\Phi$ compared with the corresponding asymptotic value. The RMSE is plotted as a function of $\Phi_{11}$ in model \eqref{sim_model1}. }
\lb{fig1}
\end{figure}

\subsection{Three-dimensional VAR(1)}
The second setting we consider is a first order three dimensional vector autoregression process. Specifically, we consider the model 
\be
 \begin{pmatrix}X_{t,1}\\X_{t,2}\\X_{t,3}\end{pmatrix} =   
\begin{pmatrix}  
\Phi_{11} & 0    & 0 \\
0.1           & 0.5 & 0  \\
\Phi_{31} & 0.4 & 0.8 \\
\end{pmatrix} \begin{pmatrix}X_{t-1,1}\\X_{t-1,2}\\X_{t-1,3}\end{pmatrix}
+  \begin{pmatrix}Z_{t,1}\\Z_{t,2}\\Z_{t,3}\end{pmatrix}
\lb{sim_model2}
\ee
where the errors $Z_t$ are independent and identically distributed as $N(0, 1_3).$ 
The different scenarios considered are $\Phi_{31} \in \{0.1, 1\}$ and $\Phi_{11} \in \{-.99, -.95, $ $ -0.5, 0.4, 0.9, 0.99\}.$ Maximum likelihood estimation is initialized at the Yule-Walker solution. For the MLE optimization box constraints are used on the pre-parameters and the values of the bounds are same as those in the VAR(1) case. We  use $N(0, 5)$ priors for the real valued pre-parameters and $Bernoulli$ $(0.5)$ prior for the reflection parameter  $\delta$.   Since there are 15 parameters in the model, we only report the overall Monte carlo average of  the estimation error for the coefficient matrix, given by  $N^{-1}\sum_{j=1}^N \|{\widehat{\Phi}}^{(j)}  - \Phi\|, $
where  ${\widehat{\Phi}}^{(j)}$ is the estimator of $\Phi$ based on the $j$th Monte Carlo replication. The overall RMSE is reported in Table~\ref{tab5}. The results  show large efficiency gain for the MLE and the Bayes estimator compared to the Yule-Walker estimator, particularly for processes with roots near the boundary. 

\begin{table*}
\centering
\caption{Overall RMSE, square-root of $nN^{-1}\sum_{j=1}^N \| {\hat{\Phi}} - \Phi\|^2,$  for  different estimators of $\Phi$ for model \eqref{sim_model2}. }
\begin{tabular}{c c   c   c   c  }
\hline
& $\Phi_{11}$ & Yule-Walker  & Bayes & MLE  \\
\hline
\multirow{6}{*}{$\Phi_{31}=.1$} 
& -0.99 & 0.207 & 0.192 & 0.182 \\
& -0.95 & 0.204 & 0.201 & 0.190 \\
& -0.50 & 0.241 & 0.258 & 0.233 \\
&  0.40 & 0.252 & 0.259 & 0.238 \\
&  0.90 & 0.225 & 0.205 & 0.199 \\
&  0.99 & 0.227 & 0.193 & 0.198 \\
\hline
\multirow{6}{*}{$\Phi_{31}=1$} 
& -0.99 &  0.213 & 0.193 & 0.183\\
& -0.95 &  0.207 & 0.200 & 0.190\\
& -0.50 &  0.239 & 0.251 & 0.229\\
&  0.40 &  0.241 & 0.247 & 0.225\\
&  0.90 &  0.235 & 0.197 & 0.189\\
&  0.99 &  0.326 & 0.188 & 0.191\\
\hline
\end{tabular}
\lb{tab5}
\end{table*}

\subsection{Second order VAR}
Next we consider a second order two dimensional vector autoregression process. Specifically, the model is 
\be
\begin{pmatrix}X_{t,1}\\X_{t,2} \end{pmatrix} =
\begin{pmatrix}  \Phi_{1,11} & 0 \\ 1 & 0.4  \\ \end{pmatrix}\begin{pmatrix}X_{t-1,1}\\X_{t-1,2} \end{pmatrix} 
+  \begin{pmatrix} 0 & 0 \\ 0 & .45 \end{pmatrix}\begin{pmatrix}X_{t-2,1}\\X_{t-2,2} \end{pmatrix} 
+ \begin{pmatrix}Z_{t,1}\\Z_{t,2} \end{pmatrix}  
\lb{sim_model3}
\ee
where the errors $Z_t$ are assumed to be i.i.d. $N(0, 1_2). $ This particular parameterization provides a one-dimensional parameterization in terms of  one of the roots ($\Phi_{1,11}$)  of the VAR(2) process and is convenient for illustrating the performance of the estimators as a function of the stability of the process as it changes from very stable to near unit root process.  The scenarios considered are $\Phi_{11} \in \{ .99, -.95, -.9, -.8, -.6, -.4, -.2, 0, .2,  .4, .6, .8, .9, .95, .99 \}.$ 
Maximum likelihood estimation is  initialized at the Yule-Walker solution. The priors are again chosen in a default manner with $N(0, 5)$ priors for the real valued pre-parameters and $Bernoulli(0.5)$ prior for the reflection parameters. Bayesian computation is done using metropolis random walk for the real parameters and independent jumps for the reflection parameters.  Due to large number of parameters  we  only report the Monte Carlo average of  the overall estimation error for the autoregressive coefficient matrices $N^{-1}\sum_{j=1}^N ( \|{\hat{\Phi}}_{1}^{(j)}  - \Phi_1\| + \|{\hat{\Phi}}_{2}^{(j)}  - \Phi_2\|)$
where  ${\widehat{\Phi}}_{1}^{(j)}$ is the estimator of $\Phi_1$ based on the $j$th Monte Carlo replication and ${\widehat{\Phi}}_{2}^{(j)}$ is that for  $\Phi_2.$  The likelihood based estimators continue to enjoy large efficiency gain over the moment-based estimators in the second order process. The advantage of the proposed parameterization is also seen in terms of gain in numerical stability of computation near the causal boundary. 

\begin{table*}
\centering
\caption{Overall RMSE, square-root of $nN^{-1}\sum_{j=1}^N ( \|{\hat{\Phi}}_{1}^{(j)}  - \Phi_1\| + \|{\hat{\Phi}}_{2}^{(j)}  - \Phi_2\|),$ for  different estimators of $\Phi_1$ and $\Phi_2$ for model \eqref{sim_model3}. }
\begin{tabular}{c  c  c  c }
\hline
$\Phi_{11}$ & Yule-Walker & Bayes & MLE\\
\hline
-0.99 & 0.602 & 0.372 & 0.401 \\
-0.95 & 0.421 & 0.368 & 0.394 \\
-0.90 & 0.409 & 0.378 & 0.400 \\
-0.80 & 0.387 & 0.370 & 0.385 \\
-0.60 & 0.389 & 0.366 & 0.383 \\
-0.40 & 0.384 & 0.356 & 0.374 \\
-0.20 & 0.380 & 0.351 & 0.375 \\
 0.00 & 0.395 & 0.367 & 0.391 \\
 0.20 & 0.381 & 0.362 & 0.377 \\
 0.40 & 0.403 & 0.370 & 0.388 \\
 0.60 & 0.385 & 0.349 & 0.363 \\
 0.80 & 0.508 & 0.354 & 0.382 \\
 0.90 & 0.597 & 0.314 & 0.338 \\
 0.95 & 0.747 & 0.348 & 0.355 \\
 0.99 & 0.962 & 0.359 & 0.393\\
\hline
\end{tabular}
\lb{tab6}
\end{table*}


\section{Unemployment rate}
 
For an empirical application we consider time series of unemployment proportions of the population, published by the Bureau of Labor Statistics at \url{http://data.bls.gov/cgi-bin/surveymost?ln}. The series we consider is a multivariate series of Unemployment Rate - White - LNS14000003, Unemployment Rate - Black or African American - LNS14000006, Unemployment Rate - Hispanic or Latino - LNS14000009. Unemployment rate is an important macroeconomic series and has been heavily analyzed. In a well-cited paper, \cite{nelson_plosser} analyzed many macro-economic time series including unemployment rate and since then many analyses and discussions have ensued on stability of the unemployment time series. From an economic point of view the series is generally perceived to be stable, a desirable feature for the health of the economy. There are many subtle features in the unemployment rate series, including variations across different sub-groups such as race and gender or  lead-lag relationship among the subgroups, that may be of interest. While there may be several exogenous variables that affect and could help in modeling the time series behavior of the series, here we concentrate on VAR models for the three dimensional unemployment rate series. 

The series are all monthly series starting at January of 1974 and ending at  December of 2013. Thus, the series length is  $n=480$ for each of the three series.  While the mean level of the series may be of interest, here we concentrate on the deviations from the mean. The time series plot of the three series (with individual series mean subtracted from each series) is shown in Figure~\ref{unemployment_plot}. It is clear from the plot that the three series are correlated.  The ACF and the PACF plots of the three series are shown in Figure~\ref{acf_pacf_plot}.  The PACF of the demeaned series have significant correlations at lag 1 and 2, demonstrating that a VAR(2) model may be a reasonable fit.

\begin{figure}[ht]
 \centering
 \includegraphics[width=120mm]{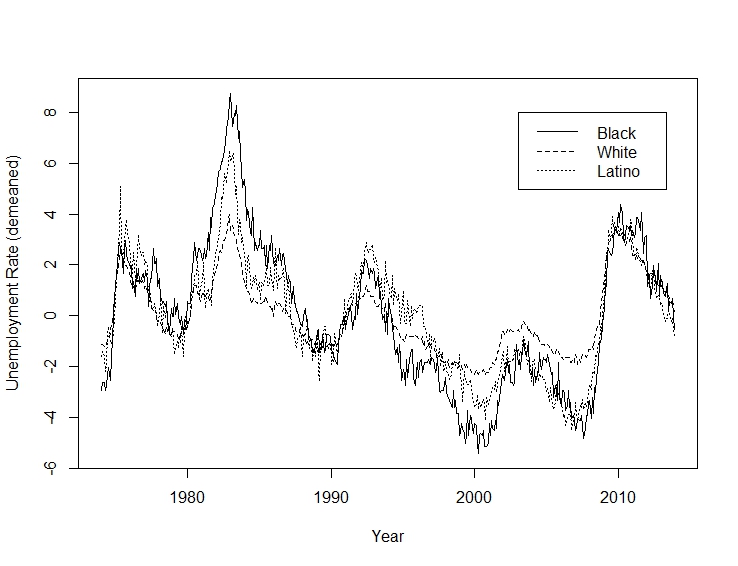}
 \caption{Time series plot of the three unemployment rate series. Each series is demeaned with respect to their individual series mean. }
\lb{unemployment_plot}
 \end{figure}
 
 
\begin{figure}[ht]
 \centering
 \includegraphics[width=120mm]{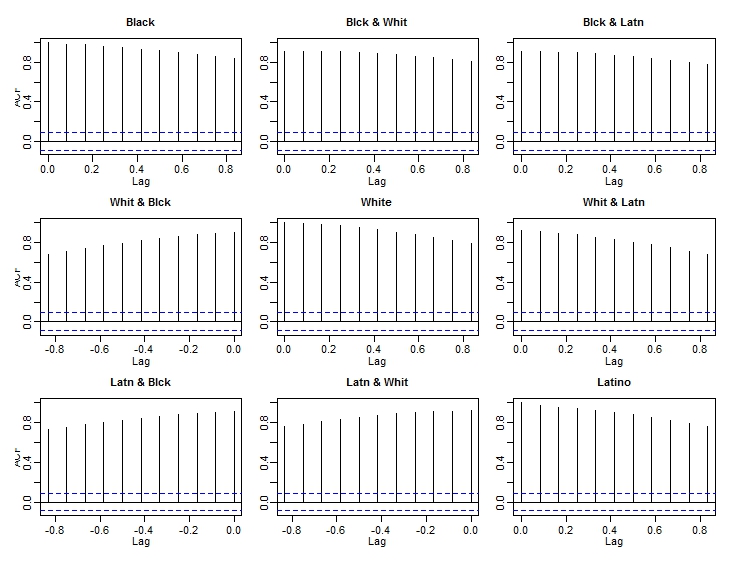}\\
 \includegraphics[width=120mm]{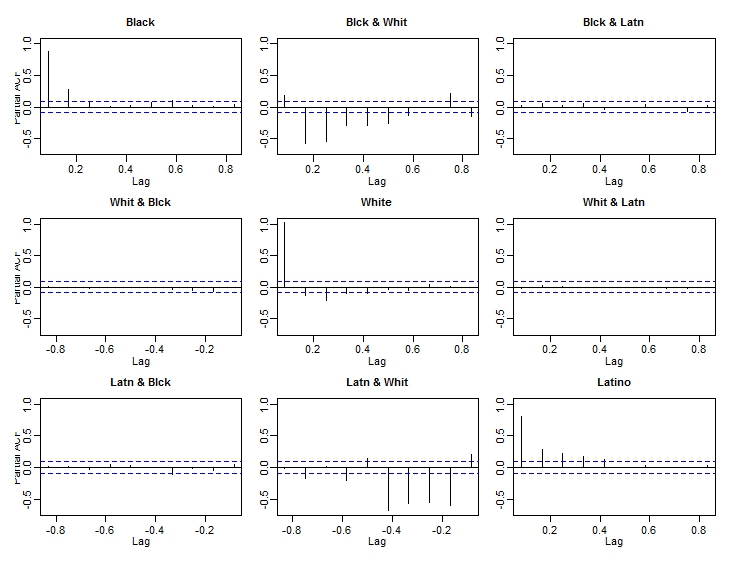}
 \caption{ACF (top 3 rows) and PACF (bottom 3 rows) plots of the demeaned unemployment rate series. The lags are shown in units of the  maximum lag  length  which is equal to 10. }
\lb{acf_pacf_plot}
\end{figure}
 
We fit  a VAR(2)  model using Yule-Walker, the proposed Bayesian and  maximum likelihood estimators based on pre-parameters and also a standard BVAR based on NIW prior on the coefficients and the innovation variance.
 The MLE methodology is initialized at the Yule-Walker estimates, transformed in terms of the  pre-parameters. The Bayesian estimation is done with priors $N(0,5)$ for the real pre-parameters and $Bernoulli(0.5)$ on the reflection parameters.  The NIW prior on the coefficients and the innovation variance is specified as independent $N(0, 0.5)$ for the entries of the coefficient matrices and $IW(1_3, 5)$ for the innovation variance. The point estimates we obtain through the four fitting methods are:
 \[ \widehat{\Phi}_1^{YW} = \begin{pmatrix} 0.632 & 0.727 & -0.034 \\
 0.062 & 1.146& -0.036 \\ 0.0854& 0.756 & 0.567\\ \end{pmatrix} \;\;\; \widehat{\Phi}_2^{YW} = \begin{pmatrix} 0.279 & -0.582 & 0.051 \\ -0.064 & -0.131 & 0.021 \\ -0.054 & -0.609 & 0.286 \\ \end{pmatrix}, \]
\[ \widehat{\Phi}_1^{MLE} = \begin{pmatrix} 0.637 & 0.732 & -0.036\\ 0.066& 1.144 &-0.035 \\ 0.096 & 0.768 & 0.570 \\ \end{pmatrix} \;\;\; \widehat{\Phi}_2^{MLE} = \begin{pmatrix} 0.274 & -0.586 & 0.052 \\ -0.067 & -0.129 & 0.019 \\ -0.064 & -0.623 & 0.284\\ \end{pmatrix}, \]
\[ \widehat{\Phi}_1^{Bayes} = \begin{pmatrix} 0.646 &0.629 &-0.030 \\ 
0.066 &1.149 &-0.034 \\ 0.098 &0.795 & 0.575\\ \end{pmatrix} \;\;\; \widehat{\Phi}_2^{Bayes} = \begin{pmatrix} 0.270 & -0.501 & 0.048 \\ -0.067 & -0.140 & 0.021 \\ -0.068 &-0.676& 0.293 \\ \end{pmatrix}, \]
\[ \widehat{\Phi}_1^{NIW} = \begin{pmatrix} 0.625 & 0.634 & -0.018 \\ 
0.051 &1.154 &-0.047 \\ 0.097 &0.709 & 0.581 \\ \end{pmatrix} \;\;\; \widehat{\Phi}_2^{NIW} = \begin{pmatrix} 0.286 &-0.494& 0.038 \\ -0.051& -0.143& 0.033 \\ -0.064 &-0.571 &0.277 \\ \end{pmatrix}. \]

\begin{table*}
\centering
 \caption{Absolute value of the estimated roots (in decreasing order) of the determinantal equation for the different estimation methods}
 \begin{tabular}{ l  c  c  c  c  c c }
 \hline
 Yule Walker & 0.975 &   0.975 & 0.885 &  0.383 &  0.276 & 0.172 \\
 MLE             & 0.975 &   0.975 & 0.884 &  0.382 &  0.274 & 0.176 \\
 Bayes         & 0.975 &   0.975 & 0.886 &  0.367 &  0.277 & 0.171 \\
 NIW            & 0.990 &   0.961 & 0.894 &  0.363 &  0.252 & 0.144 \\
 \hline
 \end{tabular}
\centering
 \lb{tab7}
 \end{table*}

The roots of the determinantal equation of  the associated VAR(2) polynomial, ${\tilde{\Phi}}(z) = z^2 - \Phi_1 z - \Phi_2, $ allow us to check the stability of the estimates and we do find that all four methods estimate a model in the causal parameter space. Table~\ref{tab7} shows the magnitude of the roots (in decreasing order) of the determinantal equation for the four different fitting methods. As evident from Table~\ref{tab7}, there are  several roots close to the boundary of the stationary parameter space. The unrestricted NIW prior yields a root that is very close to unity. This could be especially problematic for long-term forecasts. There are  interesting economic implications of these fits since there seems to be a moderately strong lag-lead relationship between the White and the other two series. Such relationships could be helpful for prediction purposes. \\

{\it Analysis based on a more recent subset of the data:}
To demonstrate the danger of applying methodologies that do  not  guarantee estimates to be causal,  we examine the  unemployment data from the subset January 2006 to January 2010. This series is considerably shorter and has considerably more unstable features due to the recession and the post-recession recovery. We compare the Bayesian estimates based on the unrestricted NIW prior and the constrained prior based on the proposed pre-parameterization. To allow for possible stable estimates based on the NIW prior, the  prior  means for all $\Phi$ estimates are centered at 0.  Each of the 18 different entries of $\Phi_1$ and $\Phi_2$ are assigned an independent $N(0, 0.5)$ prior. The pre-parameter based estimation is obtained following the same setting as in the simulation set up. The estimated coefficients for the VAR(2) models computed based on the unrestricted and the restricted Bayes methods are: 
\begin{eqnarray}
\;\;\;\;\widehat{\Phi}_1^{NIW} &=& \tiny{ \begin{pmatrix}
0.576 & 0.128 & 0.182 \\
0.036 & 1.135 & 0.036 \\
0.261 & 0.708 & 0.559
\end{pmatrix}, \;\;\;
\widehat{\Phi}_2^{NIW}  = \begin{pmatrix} 
0.060 & -0.036 & 0.153 \\
-0.133 & -0.112& 0.046 \\
-0.395 &-0.683 & 0.556\end{pmatrix}, }
\lb{noncausal_model}
\\
\;\;\;\;\widehat{\Phi}_1^{Bayes} &=& \tiny{ \begin{pmatrix}
0.676 & 0.070 & 0.166 \\
0.053 & 1.353 & -0.082 \\
0.281 & 0.596 & 0.464 \\
\end{pmatrix},  
\widehat{\Phi}_2^{Bayes} =  \begin{pmatrix} 
-0.012 & 0.004 & 0.139 \\
-0.115 & -0.359 & 0.137 \\
-0.373 & -0.571 & 0.592 \\ \end{pmatrix}.}
\lb{causal_model}
\end{eqnarray}

\begin{table*}
\centering
 \caption{Absolute value of the estimated roots (in decreasing order) of the determinantal equation for the Bayesian methods based on the constrained prior specified in terms of the pre-parameters and the unconstrained NIW prior specified directly on the original parameters. One of the estimated roots for the NIW prior is outside the causal region (in boldface). }
\begin{tabular}{ l  c  c  c  c  c  c }
\hline
Bayes & 0.978        & 0.932 & 0.932 &  0.458 &  0.208 &  0.096 \\
NIW   & {\bf1.003} & 0.966 & 0.966 &  0.457 &  0.155 &  0.050 \\
\hline
\end{tabular}
\lb{tab8}
\end{table*}
 
Table~\ref{tab8} shows the magnitude of the roots (in decreasing order) of the determinantal equation associated with the two Bayesian estimates.  The unrestricted NIW  estimates a non-causal model, even though most of  the prior mass is within the causal region. The largest root associated with the NIW prior is 1.003. Shorter-term forecasts, e.g. one-step ahead forecast, will tend to be close for the two estimated model because the models are relatively similar (in absolute term) with respect to the magnitude of the estimated parameters.   However, the models are significantly different in terms of longer-term behavior. Thus, from a long-term policy making perspective of the two models, one may be preferred over the other. The non-causal model  will allow the possible values of unemployment rate to wander substantially away from the mean level, as we demonstrate next.

The risk of maintaining the non-causal model as a long-term dynamic model for the unemployment series becomes apparent if we  investigate the nature of the process implied by the two models. To do so, we generate pseudo data  from the estimated models for the period January 2014 to December, 2023 (10 years of data, 120 observations). For data generation we use November and December, 2013 values as the initial values. The innovation variance estimates from the two models are very similar and we use their mean value 
\[ {\hat{\Sigma}} = \begin{pmatrix} 
0.229 & 0.023 &  0.055\\
0.023 & 0.029 &  0.041\\
0.055 & 0.041 &  0.242
\end{pmatrix}
\]
for data generation. We generate 10,000 replicates and look at the extremes(minimum and maximum) of the 120 observations for each of the replicate.  Figure~\ref{stability_figure} shows the distribution of the extremes for the two models. As evident from the figure, the non-causal model is more unstable. The non-causal model allows more unrealistic values, mainly because the sample paths tend to wander much further away from the mean level of the previous period. 

\begin{figure}[ht!]
\centering
\includegraphics[width= 60mm, height = 55mm]{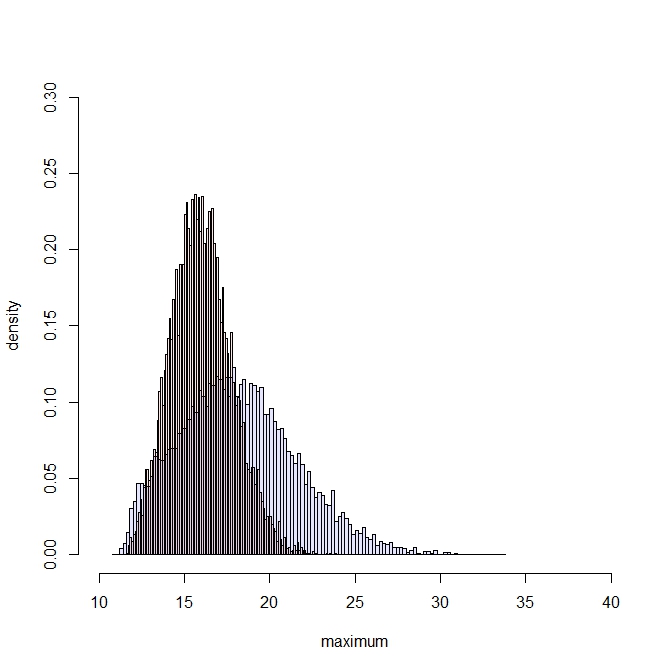}
\includegraphics[width= 60mm, height = 55mm]{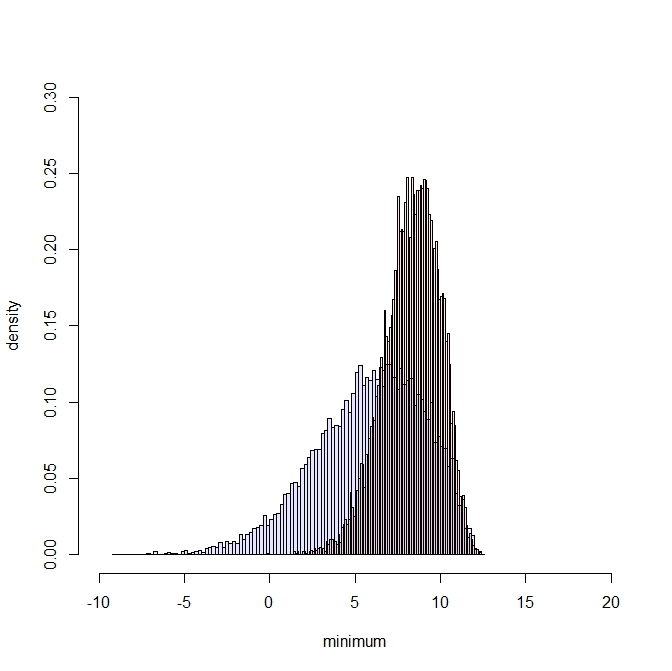}\\
\includegraphics[width= 60mm, height = 55mm]{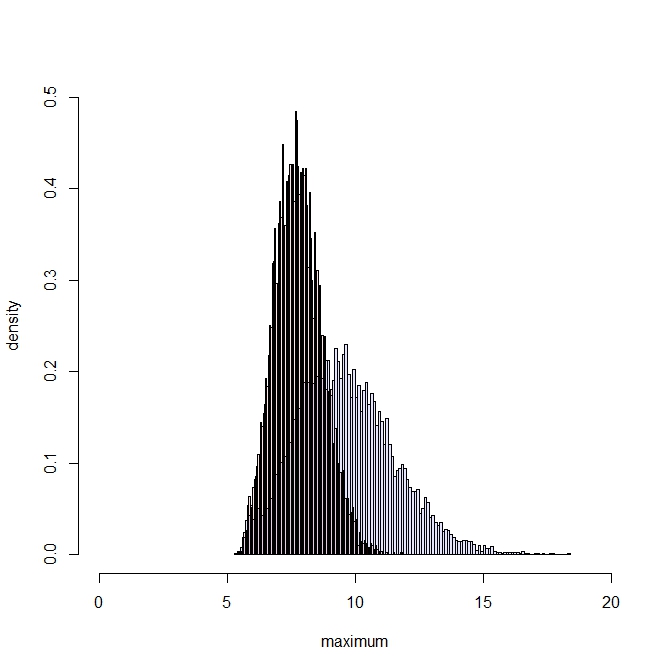}
\includegraphics[width= 60mm, height = 55mm]{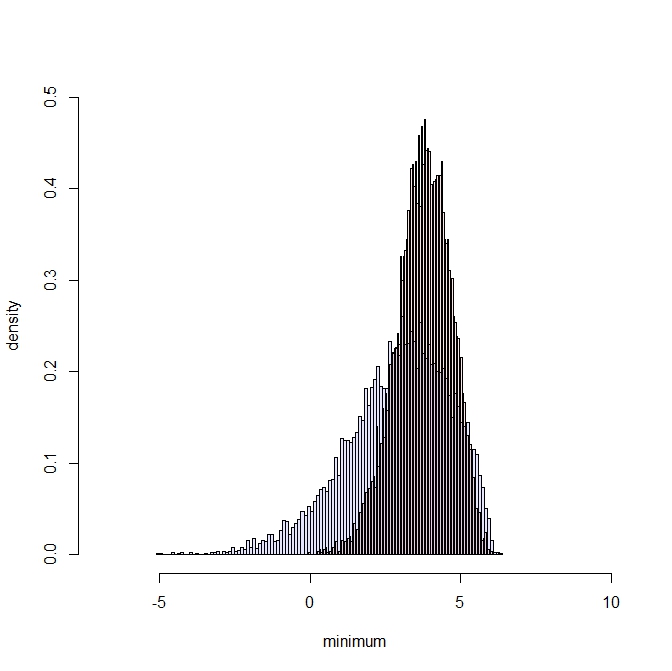}\\
\includegraphics[width= 60mm, height = 55mm]{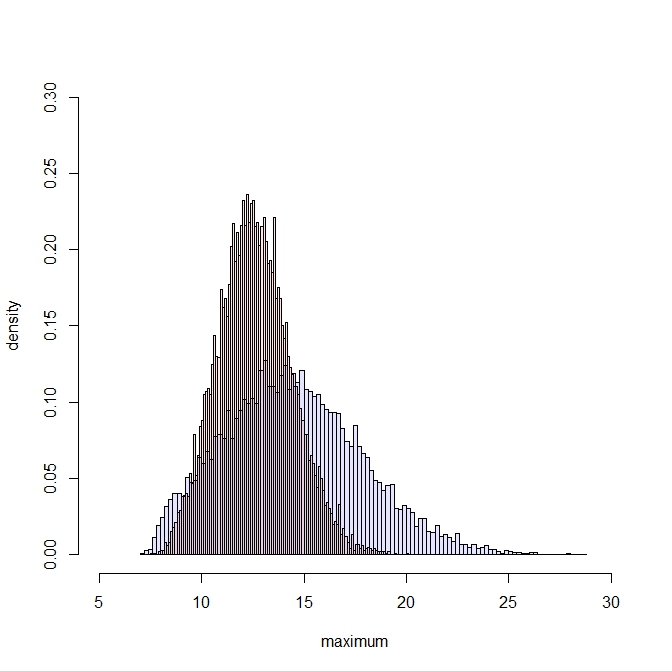}
\includegraphics[width= 60mm, height = 55mm]{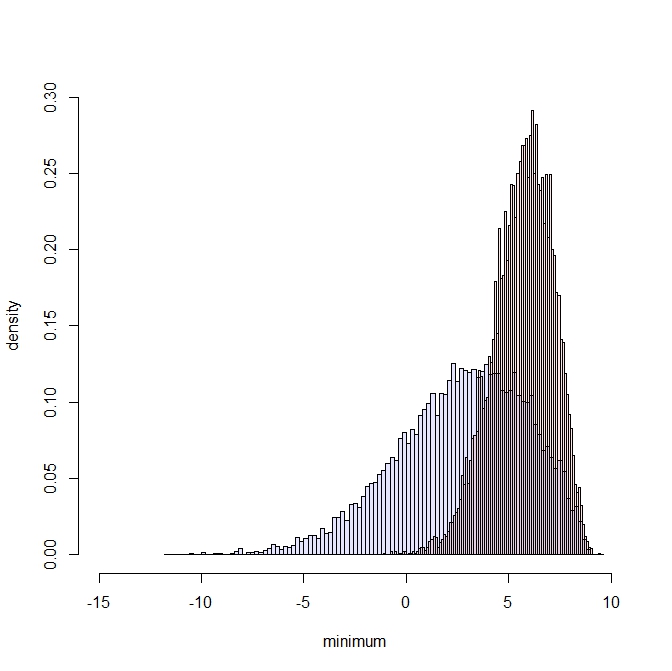}\\
\caption{Each row shows the distribution of the maximum and the minimum of the  pseudo unemployment rate values  generated based on models  \eqref{noncausal_model} and  \eqref{causal_model}. The rows correspond to  blacks, whites and latinos, respectively.In each figure the darker histogram pertains to the causal model \eqref{causal_model} while the lighter corresponds to the noncausal model \eqref{noncausal_model}. The overlapping area  is also visible in the histograms.  }
\lb{stability_figure}
\end{figure}

\section{Discussion}

In this article we introduced a new parameterization of the causal invertible VARMA process that describes the entire class of causal invertible VARMA in terms of unrestricted real-valued parameters. It is apparent from the theoretical and empirical results that the proposed parameterization holds distinct advantages over anything currently available for estimating causal invertible VARMA processes.  The parameterization can be applied to moment-based estimation as well. One could consider an objective function based on the closeness of the sample autocovariances and the theoretical autocovariances written in terms of the pre-parameters. Moreover, one may also consider objective functions in the spectral domain, such as the integrated Frobenius norm of the difference between the observed periodograms and the spectral matrix (written in terms of the pre-parameters). The advantage of the proposed parameterization is that any moments based estimators, obtained as minimizers of objective functions over the pre-parameter space, is guaranteed to be causal and invertible.  

Beyond the obvious advantage of being able to estimate a causal invertible process, the proposed parameterization can potentially facilitate other aspects of VARMA modeling, such as reduced rank formulation \cite{velu_etal1986, ahn_reinsel}. The reduced rank version of the proposed parameterization can be obtained by for the first order polynomial in a straight-forward manner. For higher order polynomials, the exact formulation needs to be investigated.

The proposed parameterization is as dense as the original VARMA parameterization. The total number of parameters is the same as that of the original $m$-dimensional VARMA($p, q$) process, i.e. equal to $(p+q)m^2 + \binom{m+1}{2}.$ For moderate $p,q,$ and $m$ there could be a large number of parameters and as with most multi- dimensional problems with dense parameterization, care needs to be exercised in computation. Sparse reparameterization  that maintain causality and invertibility of higher dimensional VARMA processes is a topic of future research.

\section*{Disclaimer:}
This report is released to inform interested parties of research and to encourage discussion.
The views expressed are those of the authors and not necessarily those of the U.S. Census Bureau.

\appendix
\section*{Appendix: Proofs }\label{app}

\begin{proof}[Proof of Proposition~\ref{prop1}]
The ' if' part follows immediately from substitution. For the converse note that 
\[ (U -M)^{1/2}((U -M)^{1/2})' = (AU^{1/2})(AU^{1/2})'.\]
Therefore we can find  $Q_1 \in \nu_{r,m}$ such that $(U -M)^{1/2} = A U^{1/2} Q_1^{\prime}.$ Hence the result.

\end{proof}

To prove Proposition~\ref{prop2}, we first prove a lemma.

\begin{lemma}
\label{lem:riccatiSoln}
 If $ M \in \mathscr{S}_{+}$, then
 $V =  \sum_{j \geq 1}A^j M A^{\prime j}$ satisfies
 $V = A (V +M) A^{\prime}$.  Moreover, if $A \in
\mathfrak{S}^m_1$, this $V$ is the unique solution to the Riccati
 equation.
\end{lemma}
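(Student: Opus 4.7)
The plan is to separate the lemma into two simple tasks: verify the proposed formula for $V$ plugs into the Riccati equation, then argue uniqueness via an iterated subtraction that kills everything in the limit. The first is a purely formal manipulation of the infinite sum; the second rests on the Schur-stability hypothesis.

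For the existence claim, I would first note that convergence of $V = \sum_{j \geq 1} A^j M A'^{j}$ is not automatic from $M \in \mathscr{S}_+$ alone: we need $\rho(A) < 1$, i.e.\ $A \in \mathfrak{S}^m_1$, so that $\|A^j\|$ decays geometrically in any submultiplicative norm and the sum converges absolutely. (The lemma as stated is most naturally read as having this hypothesis implicit in the first half, since the ``moreover'' sentence adds nothing without it.) Given convergence, the verification is direct:
\begin{align*}
A(V + M) A' \;=\; A V A' + A M A'
\;=\; \sum_{j \geq 1} A^{j+1} M A'^{\,j+1} + A M A'
\;=\; \sum_{j \geq 1} A^{j} M A'^{\,j} \;=\; V,
\end{align*}
where the reindexing $j \mapsto j+1$ absorbs the extra $AMA'$ term into the $j=1$ summand of the shifted series.

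For uniqueness, suppose $V_1$ and $V_2$ both solve the Riccati equation $V = A(V+M)A'$. Their difference $W := V_1 - V_2$ then satisfies the homogeneous Stein equation $W = A W A'$. Iterating this relation $k$ times yields $W = A^k W A'^{\,k}$ for every $k \geq 1$. Because $A \in \mathfrak{S}^m_1$, we have $A^k \to 0$ as $k \to \infty$, so the right-hand side tends to the zero matrix, forcing $W = 0$ and hence $V_1 = V_2$.

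The only real obstacle is the convergence/exchange-of-sum justification in the first display: one must be a little careful about passing $A$ and $A'$ through the infinite series. This is routine once $A \in \mathfrak{S}^m_1$ is invoked, since then the series converges in operator norm and multiplication by the bounded operators $A$, $A'$ is continuous. An alternative, more machine-like route for uniqueness is vectorization: the equation reads $(I_{m^2} - A \otimes A)\mathrm{vec}(V) = \mathrm{vec}(AMA')$, and the eigenvalues of $A \otimes A$ are products $\lambda_i(A)\lambda_j(A)$, all of modulus $<1$ when $A$ is Schur-stable, so $I_{m^2} - A \otimes A$ is invertible; I would mention this only as a remark, since the iterated-subtraction argument is shorter.
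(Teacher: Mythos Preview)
Your proof is correct. The paper handles the first assertion with the one-liner ``trivial algebra,'' which your reindexing display makes explicit; your caveat about needing $A \in \mathfrak{S}^m_1$ for convergence of the series is well taken and in fact more careful than the paper's statement. For uniqueness the paper takes precisely the vectorization route you relegate to a remark: from $A(V-\widetilde V)A' = V-\widetilde V$ it passes to $(1_{m^2} - A\otimes A)\,\mathrm{vec}(V-\widetilde V)=0$ and invokes invertibility of $1_{m^2}-A\otimes A$ under Schur-stability. Your iterated-subtraction argument $W = A^k W A'^{\,k} \to 0$ is equally valid and arguably more direct, since it avoids the Kronecker-eigenvalue observation; the vectorization approach, on the other hand, connects more visibly to the explicit solution formula $\mathrm{vec}(U) = (1_{m^2}-A\otimes A)^{-1}\mathrm{vec}(M)$ used elsewhere in the paper.
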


\begin{proof} The first assertion is trivial algebra.  Now suppose there are two
 solutions, $V$ and $\widetilde{V}$.  Then
\[
A (V - \widetilde{V}) A^{\prime}
 =A ( [V +M] - [\widetilde{V} +M])A^{\prime}
 =A  [V +M]A^{\prime} - A [\widetilde{V} +M]A^{\prime}
 =V - \widetilde{V},
\]
 and by taking $\mbox{vec}$ we obtain
\[
  \left( 1_{m^2} -A \otimes A \right) \, \mbox{vec} (V  -
  \widetilde{V} ) = 0.
\]
 Because $A \in \mathfrak{S}^m_1$, the matrix $1_{m^2} - A\otimes A$
 is invertible, implying that $ \mbox{vec} (V -
  \widetilde{V} ) = 0$, i.e., $V = \widetilde{V}$.
\end{proof}

\begin{proof}[Proof of Proposition~\ref{prop2}]
First consider any $A$ in the domain of $\varphi$, which maps to
\[
 \left( \sum_{j \geq 1} A^j M A^{\prime j},
 {\left( \sum_{j \geq 1} A^jM A^{\prime j}  \right)}^{-1/2} A { \left( \sum_{j \geq 0} A^j M A^{\prime j}
  \right) }^{1/2} \right).
\]
  Applying $\vartheta$ to this yields
\[
  { \left( \sum_{j \geq 1} A^j M A^{\prime j}
  \right) }^{1/2} { \left( \sum_{j \geq 1} A^j M A^{\prime j}
  \right) }^{-1/2} \Phi_1 { \left(  \sum_{j \geq 0} A^j M A^{\prime j}
  \right) }^{1/2} { \left(  \sum_{j \geq 0} A^j M A^{\prime j}
  \right) }^{-1/2},
\]
 which equals $A$; therefore $\vartheta \circ \varphi =
 \mbox{id}$.  The converse follows from Lemma~\ref{lem:riccatiSoln}.

Consider any $(V, Q)$ in the domain of $\vartheta$, which
is mapped to $ V^{1/2} Q $ ${(V + M)}^{-1/2} $
 and we use the abbreviation ${\widetilde{A}}_1 = V^{1/2} Q {(V +
M)}^{-1/2}$ for convenience.  Note that ${\widetilde{A}} (V +
M) {\widetilde{A}}^{\prime} = V$ by algebra.  Letting $U = V +
M$, it then follows that $U ={\widetilde{A}}U {\widetilde{A}}_1^{\prime} +
M.$ Thus by Stein's result we must have ${\widetilde{A}}_1 \in
 \mathfrak{S}^m_1$.  Then by Lemma \ref{lem:riccatiSoln} we know that
 $V$ must be a unique solution to ${\widetilde{A}} (V +
M) {\widetilde{A}}^{\prime} = V$.  Applying $\varphi$ we
 obtain $(\widetilde{V}, \widetilde{Q})$, where
\begin{align*}
 \widetilde{V} & = \sum_{j \geq 1} {\widetilde{A}}^j M
{\widetilde{A}}^{j \prime} \\
 \widetilde{Q} & = \widetilde{V}^{-1/2} {\widetilde{A}} {(\widetilde{V} +
M)}^{1/2}.
\end{align*}
 Noting that $\widetilde{V}$ also solves ${\widetilde{A}} (\widetilde{V} +
M){\widetilde{A}}^{\prime} = \widetilde{V}$ by algebraic
 verification, Lemma \ref{lem:riccatiSoln} tells us that
 $\widetilde{V} = V$.  Plugging this result back into the formula
 for $\widetilde{Q}$ yields
\[
 \widetilde{Q} = V^{-1/2} V^{1/2} Q {(V +M)}^{-1/2} {(V +
M)}^{1/2} = Q.
\]
 Therefore $\varphi \circ \vartheta = \mbox{id}$ as well.
\end{proof}

\begin{proof} [Proof of Theorem~\ref{thm:stabCond}]
Suppose $A(z)$ is Schur-Stable. Then $$A(B^{-1})B^k X_t = Z_t$$ defines a Causal $VAR(k)$ process [Brockwell and Davis, 1996]. Then if we let ${\underline{U}}_{k}  = \Gamma_{k}$ we have the assertion. \\
Conversely, suppose  there is a positive definite block Toeplitz matrix $\underline{U}_k$ of the form \eqref{lower_ut} such that the coefficients of the polynomial $A =  \left[ A_1, \cdots, A_k \right] $  satisfy $A =  \xi_{k}' {\underline{U}}_{k-1}^{-1}.$
The determinantal equation $det(A(z) = 0$ has the same roots as that of $det(\tilde{A}) = 0$ where 
${\tilde{A}}$ is defined in \eqref{atilde}.
We can see that ${\tilde{A}}$ satisfies the system of equations
\be
 {\underline{U}}_{k-1} = \tilde{A} {\underline{U}}_{k-1} {\tilde{A}}' + {\tilde{\Sigma}} 
\lb{var1_form}
\ee
where ${\tilde{\Sigma}} = \begin{pmatrix}C_k & 0 \\ 0 & 0 \end{pmatrix}.$
Because ${\underline{U}}_k$ is positive definite, so is $C_k$. 
 We then obtain stability of ${\tilde{A}}$ modifying the argument of Stein (1952) slightly to show that as long as $C_k$ is positive definite, the eigenvalues of ${\tilde{A}}$ are strictly smaller than one in absolute value.  Let $w^* = (w_1^{*},\cdots, w_p^{*})^{\prime}$ be any left  eigenvector of $\tilde{A}$ corresponding to an eigenvalue $\lambda$. First we show that $w_1 \ne 0.$  From the structure of $\tilde{A}$ we have $w_1^*A_{i-1} +  w_i^* = \lambda w_{i-1}^*$ for $1 < i < k$ and $w_1^* A_k = \lambda w_k^*$. Thus, if $w_1$ is zero then the entire eigenvector is zero leading to a contradiction. Pre and post multiplying equation \eqref{var1_form} by $w^*$ and $w$, we have 
\[ (1 - |\lambda|^2)w^*{\underline{U}}_{k-1} w = w^*\tilde{\Sigma} w = w_1^*C_k w_1 > 0.\]
From the positive definiteness of ${\underline{U}}_{k-1}$ it follows that $|\lambda| < 1$.

\end{proof}

\begin{proof} [Proof of Theorem~\ref{thm:schur_iter}] Let  ${\underline{U}}_k$ be a positive definite block Toeplitz matrix. 
For $2 \leq t \leq k,$
\begin{align*}
 C_{t} & = U(0) - \xi_t^{\prime} \, {\underline{U}}_{t-1}^{-1} \, \xi_t \\
  & =  U(0) - [ \xi_{t-1}^{\prime} , U(t)' ]  \begin{pmatrix}
{\underline{U}}_{t-2} & \kappa_{t-1} \\
\kappa_{t-1}^{\prime} & U(0)
\end{pmatrix}^{-1} \left[ \begin{array}{l} \xi_{t-1} \\ U(t) \end{array} \right] \\
  & = U(0) - \xi_{t-1}^{\prime}{\underline{U}}_{t-2}^{-1} \, \xi_{t-1}   \\
& \qquad \qquad  -  { \left( U(t)' -  \kappa_{t-1}^{\prime}  {\underline{U}}_{t-2}^{-1} \xi_{t-1} \right) }  D_{t-1}^{-1}{ \left( U(t)' - \kappa_{t-1}^{\prime} {\underline{U}}_{t-2}^{-1}  \xi_{t-1} \right)^{\prime} } \\
  & = C_{t-1} - { \left( U(t)' - \kappa_{t-1}^{\prime} {\underline{U}}_{t-2}^{-1} \, \xi_{t-1} \right) } \, D_{t-1}^{-1} \,  { \left( U(t)' - \kappa_{t-1}^{\prime} {\underline{U}}_{t-2}^{-1} \, \xi_{t-1} \right)^{\prime} }.
\end{align*}

Here terms involving $\xi_0, \kappa_0$  for the $t=1$ case are assumed to be zero. 
The second term on the right side is a positive definite term because ${\underline{U}}_k > 0$, which implies that any principal diagonal block is positive definite and hence $D_t >_{L} 0$ for $t \leq k.$ Thus, 
\be 
C_{t-1} - C_t = { \left( U(t)' - \kappa_{t-1}^{\prime} {\underline{U}}_{t-2}^{-1} \, \xi_{t-1} \right) } \, D_{t-1}^{-1} \,  { \left( U(t)' - \kappa_{t-1}^{\prime} {\underline{U}}_{t-2}^{-1} \, \xi_{t-1} \right) }^{\prime} >_{L} 0.
\lb{diffschur}
\ee
Based on the assumption that ${\underline{U}}_k > 0$ we have  $C_k > 0.$  Hence $C_0 > C_1 > \cdots > C_k > 0.$

For the converse, let $C_0 > C_1 > \cdots > C_k > 0$ be defined as in (15). Then $C_0 = U(0) > 0$ and $ C_k = U(0) - \xi_k^{\prime}{\underline{U}}_{k-1}^{-1}\xi_k > 0$ which implies 
\begin{equation*}
{\underline{U}}_k = \begin{pmatrix}
U(0) & \xi_k^{\prime} \\
\xi_k & {\underline{U}}_{k-1}
\end{pmatrix} ,
\end{equation*}
is positive definite.
\end{proof}

\begin{proof} [Proof of Theorem~\ref{thm:steinStab}] 
If $A(z)$ is Schur-Stable then we can construct a causal VAR($k$) process $\{X_t\}$ with innovation variance as the identity matrix $1_m$ and $A$ as the coefficient matrix. Then ${\underline{U}}_{k-1}) = \Gamma_{k-1}$ is the required positive definite block Toeplitz matrix where $\Gamma_k $ is the variance  $Var\{ X_t, X_{t-1}, \ldots, X_{t-k}\}.$

For the converse suppose   ${\underline{U}}_{k-1} \in {\mathfrak{T}}^{m,k-1}_{++}$  exists  with ${\tilde{S}}_k(A, {\underline{U}}_{k-1}) \in {\mathscr{S}}^{mk}_{+}$  and ${{S}}_k(A, {\underline{U}}_{k-1}) \in {\mathscr{S}}^{m}_{++}. $ Because of the block Toeplitz structure, 
\[ {\tilde{S}}_k(A, {\underline{U}}) = \begin{pmatrix} {S}_k(A, {\underline{U}}) & \star \\ \star & 0 \end{pmatrix}.\]
Positive definiteness of ${\tilde{S}}_k(A, {\underline{U}}_{k-1})$ would then imply
\[ {\tilde{S}}_k(A, {\underline{U}}) = \begin{pmatrix} {S}_k(A, {\underline{U}}) & 0  \\ 0 & 0 \end{pmatrix}.\]
 Let $w^* = (w_1^{*},\cdots, w_p^{*})^{\prime}$ be any left  eigenvector of $\tilde{A}$ corresponding to an eigenvalue $\lambda$. Because ${{S}}_k(A, {\underline{U}}_{k-1}) \in {\mathscr{S}}^{m}_{++},$ following the arguments given in the proof of Theorem~\ref{thm:stabCond}, we have $|\lambda| < 1$. 
\end{proof}


\begin{thebibliography}{9}



\bibitem{ahn_reinsel}
\textsc{Ahn, S. K.} and \textsc{Reinsel, G. C.} (1988).
{Nested Reduced Rank Autoregressive Models for Multiple Time Series}.
 \textit{Journal of the American Statistical Association}
\textbf{83}  849--856.



\bibitem{ansley}
\textsc{Ansley, C. F.} (1988).
{An algorithm for the exact likelihood of mixed autoregressive moving average
process}.
 \textit{ Biometrika}
\textbf{66}   59-–65.



\bibitem{barndorff_neilsen}
\textsc{Barndorff-Nielsen, O.} and \textsc{Schou, G.} (1973).
{On the parametrization of autoregressive models by partial autocorrelations}.
 \textit{Journal of Multivariate Analysis}
\textbf{3}  408--419.


\bibitem{bhatia}
\textsc{Bhatia, R.} (1997). \textit{Matrix Analysis}, Springer. 


\bibitem{brockwell_davis}
\textsc{Brockwell,  P.J.} and \textsc{Davis, R.A.} (1991).
\textit{Time Series: Theory and Methods.} 2nd ed. Springer.


\bibitem{cayley}
\textsc{Cayley, A} (1846).
{About the algebraic structure of the orthogonal group and the other classical groups in a field of characteristic zero or a prime characteristic.}.
 \textit{J. Reine Angew.Math.}
\textbf{32}.


\bibitem{chikuse}
\textsc{Chikuse, Y. } (2003). \textit{Statistics on special manifolds, volume 174 of Lecture Notes in Statistics.}, Springer-Verlag, New York.




\bibitem{chiuso_etal}
\textsc{Chiuso, A.}, \textsc{Giorgio, P.} and \textsc{Soatto, S.} (2008).
{ Wide-sense estimation on the special orthogonal group.}.
 \textit{ Communications in Information and Systems}
\textbf{8}  185--200.



\bibitem{constantinescu}
\textsc{Constantinescu, T.} (1986). Schur analysis of positive block-matrices. In \textit{ I. Schur methods in operator  theory and signal processing,} volume 18 of {\it  Oper. Theory Adv. Appl.},  191--206.


\bibitem{chung_etal}
\textsc{Chung, Y.},\textsc{ Rabe-Hesketh, S.}, \textsc{ Gelman, A.}, \textsc{ Liu, J.}  and \textsc{Dorie, V.} (2011). Avoioding Boundary Estimates in Linear Mixed Models Through Weakly Informative Priors.
\textit {U.C. Berkeley Division of Biostatistics Working Paper Series}. Working Paper 284.


\bibitem{danaher_etal}
\textsc{Danaher, M. R.}, \textsc{ Roy, A.}, \textsc{ Chen, Z.}, \textsc{ Mumford, S. L.}  and \textsc{Scheisterman, E. F.} (2012). Minkowski-Weyl priors for models with parameter constraints: an analysis  of the Biocycle study. 
\textit{Journal of the American Statistical Association,} \textbf {107} 1395--1409.


\bibitem{delsargte_etal}
\textsc{Delsarte, P.}, \textsc{  Genin, Y.}  and \textsc{ Kamp, Y.} (1979). Schur parametrization of positive definite block-Toeplitz systems. \textit{ SIAM J Appl. Math.}, \textbf {36} 34--46.


\bibitem{doan_etal}
\textsc{Doan, T.}, \textsc{ Litterman, R.}  and \textsc{Sims, C.} (1984). Forecasting and conditional projection using realistic prior distributions. \textit {Econometric Reviews}  \textbf {3} 1--100.


\bibitem{dunsonneelon}
\textsc{Dunson, D.B.} and \textsc{Neelon, B.} (2003).
Bayesian inference on order-constrained parameters in generalized linear models.
\textit{Biometrics,} \textbf{59} 286--295.

\bibitem{durbin}
\textsc{Durbin, J.} (1959).  Efficient estimation of parameters in moving-average models. \textit {Biometrika,} \textbf {46} 306–-16.


\bibitem{deFrutos}
\textsc{de Frutos, R. F.} and \textsc{Serrano, G. R.} (1997). A generalized least squares estimation method for
invertible vector moving average models. \textit {Economics Letters,} \textbf {57} 149–-56.


\bibitem{fuller}
\textsc{Fuller, W. A.}  (1995). \textit {Introduction to Statistical Time Series,} Wiley-Interscience.




\bibitem{gallindo_garre}
\textsc{ Galindo-Garre} and \textsc{ F. and Vermunt, J}. (2006). Avoiding
 boundary estimates in latent class analysis by Bayesian posterior mode estimation. \textit{ Behaviormetrika,} \textbf{ 33}, 43--59.

\bibitem{gelfand_etal}
\textsc{Gelfand, A. E.}, \textsc{ Smith, A. F. M.}  and \textsc{Lee, T. M.} (1992). Bayesian Analysis of Constrained Parameters and Truncated Data Problems.
\textit{Journal of the American Statistical Association}, \textbf{87}, 523-532.


\bibitem{godolphin}
\textsc{Godolphin, E. J.} (1984). A direct representation for the large sample maximum likelihood estimator of
a Gaussian autoregressive moving average process. \textit{ Biometrika,} \textbf{ 71} 281-–89.



\bibitem{gunn_dunson}
\textsc{ Gunn, L.H.} and  \textsc{ Dunson. D.B.} (2005). A transformation approach for incorporating umbrella shape restrictions. \textit{ Biostatistics}, \textbf{ 6} 434--449.




\bibitem{hannan_rissanen}
\textsc{ Hannan, E. J.} and \textsc{ Rissanen J.} (1982).  Recursive estimation of mixed autoregressive-moving average
order. \textit{ Biometrika,} \textbf{ 69}  81-–94.



\bibitem{hoff}
\textsc{ Hoff, P.D.} (2009). Simulation of the Matrix Bingham-von Mises-Fisher Distribution, With Applications to Multivariate and Relational Data. \textit{ Journal of Computational and Graphical Statistics,}  \textbf{ 18} 438--456.


\bibitem{kadiyala_karlsson1997}
\textsc{Kadiyala, K. R.} and \textsc{ Karlsson, S.} (1997). Numerical methods for estimation and inference
in bayesian var-models. \textit{ Journal of Applied Econometrics,} \textbf{ 12}  99--132.

\bibitem{kaszkurewicz_bhaya}
\textsc{ Kaszkurewicz, E.} and\textsc{  Bhaya,  A.} (2000). \textit{ Matrix  Diagonal Stability in Systems and Computation,} Birkhauser, Boston. 


\bibitem{koop_korobilis}
\textsc{ Koop, G.} and  \textsc{ Korobilis, D.} (2009). \textit{ Bayesian multivariate time-series methods in empirical macroeconomics.} Now Publishers Inc, Boston. 

\bibitem{koreisha_pukkila}
\textsc{ Koreisha, S.} and \textsc{ Pukkila, T.} (1989). Fast linear estimation methods for vector moving average
models. \textit{ Journal of Time Series Analysis,} \textbf{ 10} 325-–39.


\bibitem{leonard_hsu}
\textsc{ Leonard, T} and \textsc{ Hsu, J. S. J.} (2002). Bayesian Inference for a Covariance Matrix. \textit{ Ann. Statist.,} \textbf{ 20} 1669--1696. 




\bibitem{lindstrom_bates}
Lindstrom, M. J. and Bates, D. M. (1988). Newton-Raphson and EM algorithms for linear mixed-effects models for
repeated-measures data, \textit{ Journal of the American Statistical
Association,} \textbf{ 83} 1014--1022.


\bibitem{litterman}
 Litterman, R. B. (1980). Techniques for Forecasting with Vector Autoregressions,\textit{  Ph.D. thesis, University of Minnesota.}


\bibitem{lutkepohl}
L\"utkepohl, H. (2007)\textit{ New Introduction to Multiple Time Series Analysis.} Springer.


\bibitem{marriott_smith}
Marriott, J.M. and Smith, A.F.M. (1992). Reparametrization aspects of numerical Bayesian methodology for autoregressive moving-average models. \textit{ Journal of Time Series Analysis,} \textbf{ 13} 327--343.



\bibitem{mauricio_1995}
Mauricio, J. A. (1995). Exact maximum likelihood estimation of stationary vector ARMA models.
\textit{ Journal of the American Statistical Association,} \textbf{ 90} 282-–91.


\bibitem{mauricio_1997}
Mauricio, J. A. (1997). The exact likelihood of a vector autoregressive moving average model. \textit{ Applied
Statistics,} \textbf{ 46} 157-–71.


\bibitem{mauricio_2002}
Mauricio, J. A. (2002). An algorithm for the exact likelihood of a vector autoregressive moving
average model. \textit{ Journal of Time Series Analysis,} \textbf{ 23} 473–-86.

\bibitem{mcelroy_findley}
McElroy, T. S. and Findley, D. A. (2014). Fitting constrained vector autoregression models. \textit{ U.S. Census Bureau Research Report Series: Statistics \#2013-06,} working paper. 



\bibitem{metaxoglou_smith}
Metaxoglou, K. and Smith, A.  (2007). Maximum likelihood estimation of VARMA models using a state-space EM elgorithm. \textit{ Journal of Time Series Analysis,} \textbf{ 28} 666--685.

\bibitem{nelson_plosser}
\textsc{Nelson, C. R.}  and \textsc{Plosser, C.I.} (1982). Trends and Random Walks in Macroeconmic Time Series: Some Evidence and Implications. \textit{Journal of Monetary Economics,} \textbf{10} 139--162.


\bibitem{nicholls1979}
Nicholls, D. F. and Hall, A. D. (1979). The exact likelihood function of multivariate autoregressive moving
average models. \textit{ Biometrika,} \textbf{ 66} 259-–64.

\bibitem{pinheiro_bates1996}
Pinheiro, J. C. and Bates, D.M. (1996).Unconstrained Parametrizations for Variance-Covariance Matrices. \textit{  Statistics and Computing}, \textbf{ 6}  289--296. 

\bibitem{Quenneville_mcleod1992}
Quenneville, B. and McLeod, A.I. (1992).  Integration over the stationary and invertible region of an autoregressive moving average process. \textit{ Proceedings of the Joint Statistical Meetings, Boston, Massachusetts.}


\bibitem{reinsel_etal1992}
Reinsel, G. C., Basu, S. and Yap, S. F. (1992). Maximum likelihood estimators in the multivariate
autoregressive moving average model from a generalized least squares viewpoint. \textit{ Journal of Time
Series Analysis,} \textbf{ 13} 133-–45.






\bibitem{roy_etal2012}
\textsc{Roy, A.},  \textsc{Danaher, M. R., Mumford, S. L.} and \textsc{Chen, Z.} (2012). A Bayesian Order Restricted Model for Hormonal Dynamics During Menstrual Cycles of Healthy Women. \textit{ Statistics in Medicine,} {\bf 31} 2428--2440. 


\bibitem{schneider}
\textsc{Schneider, H.} (1965). Positive operators and an inertial theorem. \textit{Numer. math.,} {\bf 7} 11--17.

\bibitem{sims_zha1998}
\textsc{Sims, C. A.}  and  \textsc{ Zha, T.} (1998). Bayesian Methods for Dynamic Multivariate Models. \textit{  International Economic Review,} {\bf 39} 949--968.

\bibitem{sims1980}
\textsc{Sims, C. A.} (1980). Macroeconomics and Reality. \textit{  Econometrica}, {\bf 48} 1--48.


\bibitem{stein1952}
\textsc{Stein, P.} (1952). 
Some general theorems on iterants. \textit{ J. Res. Natl. Bur. Standards}, {\bf 48}  82-–83.





\bibitem{tunnicliffe_wilson1973}
\textsc{Tunnicliffe-Wilson, G.} (1973). The estimation of parameters in multivariate time series models.
\textit{ Journal of the Royal Statistical Society, B,}  {\bf 35} 76-–85.




\bibitem{velu_etal1986}
\textsc{Velu, R. P., Reinsel, G. C.} and  \textsc{Wichern, D. W.} (1986). Reduced Rank Models for Multiple Time Series. \textit{ Biometrika}, {\bf 73}, 105--118.

\bibitem{whittle1951}
\textsc{Whittle, P.} (1951). \textit{  Hypothesis Testing in Time Series Analysis.} Upsala: Almquist and Wiksell.

\bibitem{wise1956}
\textsc{Wise, J.} (1956). Stationarity conditions for stochastic processes of the autoregressive and moving-average type, \textit{ Biometrika,} {\bf 43} 215--219.



\end{thebibliography}
\end{document}